\newtheorem{claim}{Claim}[section]
\newtheorem{thm}{Theorem}[section]
\newtheorem{cor}[thm]{Corollary}
\newtheorem{lemma}[thm]{Lemma}
\newtheorem{prop}[thm]{Proposition}
\theoremstyle{remark}
\newtheorem{rmk}[thm]{Remark}
\theoremstyle{definition}
\newtheorem{Def}[thm]{Definition}
\title {On the Extension of the Calabi Flow on Toric Varieties}
\date{}
\author{Hongnian Huang}
\begin{document}

\maketitle

\begin{abstract}
Inspired by recent work of S. K. Donaldson on constant scalar curvature metrics
on toric complex surfaces, we study obstructions to the extension of the Calabi flow on a polarized
toric variety.  Under some technical assumptions, we prove that the Calabi flow
can be extended for all time.
\end{abstract}

\section{Introduction}

In \cite{Ca1}, E. Calabi proposed to deform a given K\"ahler metric in the direction of the Levi-Hessian of its scalar curvature. This is a 4th order fully nonlinear semiparabolic
equation aiming to attack the existence of constant scalar curvature K\"ahler metric (cscK for short) in a given K\"ahler class. Note that cscK metrics are the fixed points while extremal K\"ahler metrics are soliton solutions of the Calabi flow. The Calabi conjecture on the existence of K\"ahler-Einstein metrics, as well as Yau-Tian-Donaldson's
conjecture on the existence of extremal K\"ahler metrics are central problems in K\"ahler geometry. However, by fixing a maximal torus in the complex automorphism group, an extremal K\"ahler metric satisfies a 4th
order nonlinear partial differential equation. It is hard to attack the existence problem directly. The study of the Calabi flow seems to be an effective approach,
although rather a complicated one. In \cite{Chen1}, Xiuxiong Chen conjectured that the flow exists globally (i.e. for all time) for any smooth initial K\"ahler metric.

Unfortunately, at the moment very little is known about the global existence of the Calabi flow. In the Riemannian surface case, it was settled down by the work of P. Chrusci\'el \cite{Chr} (c.f. \cite{Chen1} also). In a subsequent paper \cite{ChenHe}, Xiuxiong Chen and Weiyong He proved that the main obstruction to the global existence of the Calabi flow is the bound of the Ricci
curvature. Other important results about Calabi flow appear in Chen-He \cite{ChenHe2}, \cite{ChenHe3}; W.Y. He \cite{He}; J. Fine \cite{JF}; G. Sz\^{e}kelyhidi \cite{GA}.

S.K. Donaldson \cite{D1} set up a program to prove the existence of an extremal K\"ahler metric on a toric surface under the $K$-stability assumption and he completed this program in the cscK case in \cite{D4}. Let us briefly introduce  Donaldson's results.

\begin{thm}(\cite{D4})
\label{M1}
Any polarized complex toric surface with zero Futaki invariant is $K$-stable if and only if it admits a constant scalar curvature K\"ahler metric.
\end{thm}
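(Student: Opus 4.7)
My plan is to set up the problem in symplectic coordinates on the moment polytope $P\subset\mathbb{R}^2$ associated to the toric surface. The cscK condition in these coordinates becomes Abreu's equation $-\sum_{i,j} u^{ij}{}_{ij}=A$, where $u$ is the symplectic potential (a strictly convex function on $P$ with the usual Guillemin singular behavior on $\partial P$), $(u^{ij})$ is the inverse Hessian, and $A$ is the constant determined by $P$ via the zero Futaki condition. The whole problem then becomes one of finding a convex $u$ on $P$ satisfying Abreu's equation with the correct boundary behavior. I would also introduce Donaldson's linear functional
\[
\mathcal L(f) \;=\; \int_{\partial P} f\,d\sigma - \int_{P} A\,f\,d\mu
\]
on convex functions $f$ on $P$, whose vanishing on affine $f$ encodes the Futaki condition and whose positivity on nontrivial simple piecewise linear convex $f$ is exactly $K$-stability in the toric setting.

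For the easy direction (cscK $\Rightarrow$ $K$-stability), I would compute the Futaki invariant of a toric test configuration coming from a rational piecewise-linear convex function and identify it, up to a positive multiple, with $\mathcal L(f)$; combined with the uniqueness of cscK metrics in a fixed class, this upgrades semistability to strict stability.

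For the hard direction ($K$-stability $\Rightarrow$ existence), the plan is a continuity method: deform $A$ to a path of target curvatures $A_t$ for which a smooth toric solution $u_0$ is available (e.g.\ the canonical Guillemin metric when $t=0$), and show that the set of $t\in[0,1]$ for which Abreu's equation has a smooth convex solution is both open and closed. Openness follows from linearization and the implicit function theorem on a suitable Banach space of symplectic potentials (the linearization is the Lichnerowicz operator, which is elliptic and invertible modulo the affine functions killed by the Futaki condition). Closedness is the substantial part: I would try to derive, under an a priori assumption of $K$-stability, uniform $C^0$ and interior $C^{2,\alpha}$ estimates on $u_t$, using convex-analytic control (normalization modulo affine functions, John's ellipsoid to control the shape of sublevel sets) together with Donaldson's interior estimates for solutions of Abreu's equation, which bound derivatives of $u$ on compact subsets of $P$ in terms of a lower bound for $\det(u_{ij})$.

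The main obstacle will be boundary behavior and the passage from a priori estimates to the contradiction with $K$-stability. Concretely, I expect that if closedness fails one extracts a limiting convex function $u_\infty$ on $P$ that is not smooth up to the boundary, and the failure of smoothness concentrates along a rational polyhedral subset. The hard step is to convert this degeneration into a nontrivial simple piecewise linear convex function $f$ on $P$ with $\mathcal L(f)\le 0$, contradicting $K$-stability. This is exactly the point where the $n=2$ restriction becomes important, since in dimension two one can integrate by parts against convex test functions and control the resulting boundary integrals using the special structure of edges of $P$. Once this dichotomy (either uniform estimates or a destabilizing $f$) is established, the continuity method closes and produces the desired cscK metric.
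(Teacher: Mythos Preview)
The paper does not prove this theorem at all: it is stated with attribution to Donaldson \cite{D4} and no argument is given beyond a two-sentence summary of his strategy (pass to the symplectic potential $u$ on the moment polygon, reduce cscK to Abreu's equation $u^{ij}{}_{ij}=-A$, and run a continuity method, the closedness step being Theorem~\ref{M2}). So there is no in-paper proof to compare your proposal against.

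That said, your outline is a faithful high-level sketch of Donaldson's actual argument and is consistent with the brief description the paper gives. One small discrepancy worth flagging: the continuity path you describe deforms the target scalar curvature $A_t$ on a fixed polygon, whereas the version the paper quotes (Theorem~\ref{M2}) deforms the data $(P(\alpha),\sigma(\alpha),A(\alpha))$ through nearby polygons; both are used in Donaldson's series of papers, but the closedness statement the present paper records is of the latter type, with the $M$-condition as the key uniform a priori control rather than a direct $C^0$/$C^{2,\alpha}$ bound. Your identification of the crux --- converting a degeneration of $u_t$ into a destabilizing piecewise-linear convex $f$ with $\mathcal L(f)\le 0$ --- is exactly the content of \cite{D4} and is not reproduced here. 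In short: your plan is correct in spirit and matches the cited proof, but the paper itself offers nothing beyond the citation.
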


His strategy of proof is as follows: if we consider the Legendre transform $u$ of a K\"ahler potential $\phi$, then we obtain a convex function $u$ on the polytope $P \subset \mathbb{R}^2$ associated to the toric variety, satisfying certain boundary conditions defined by a measure $\sigma$ on each facet of $P$ (see Definition \ref{G}). The Abreu's equation tells us that the metric is extremal if and only if $${u^{ij}}_{ij}=-A,$$ where $A$ is an affine function determined by the data $(P, \sigma)$. We need to solve this $4$-th order differential equation when $A$ is a constant and $u$ is a smooth convex function and verifies the given boundary conditions. Then, Donaldson applied the continuity method to prove the following theorem (c.f. Theorem 1 in \cite{D3}) :

\begin{thm} (\cite{D3})
\label{M2}
Let $(P(\alpha), \sigma(\alpha),A(\alpha))$ be a sequence of polytopes converging to $(P,\sigma,A)$ where  the number of edges of $P(\alpha)$ does not depend on $\alpha$. Suppose that for each $\alpha$ there is a solution $u(\alpha)$ to the problem defined by $(P(\alpha), \sigma(\alpha),A(\alpha))$, i.e.
$$
u(\alpha)^{ij}_{\ ij} = -A(\alpha),
$$ where $A(\alpha)$ is a constant and $u(\alpha)$ satisfies the Guillemin boundary conditions of $(P(\alpha), \sigma(\alpha))$.
If there is an $M > 0$ such that each $u(\alpha)$ satisfies the $M$-condition given by Definition \ref{M} below,  then there is a solution of the problem defined by $(P,A,\sigma),$ i.e. there is a smooth function $u$ such that
$$
u^{ij}_{\ ij} = -A
$$
and $u$ satisfies the Guillemin boundary conditions of $(P, \sigma)$.
\end{thm}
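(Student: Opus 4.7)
The plan is a compactness argument: use the $M$-condition to extract a subsequential limit of the $u(\alpha)$ converging to a convex function $u$ on $P$, then verify that $u$ is smooth on $\mathrm{int}(P)$, satisfies the Abreu equation there, and respects the Guillemin boundary conditions of $(P,\sigma)$. The $M$-condition is precisely what rules out the degenerations that would otherwise obstruct such a limit.

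First I would normalize each $u(\alpha)$ by subtracting an affine function so that, say, $u(\alpha)$ attains its minimum at a fixed interior point of $P$ with value zero. Because Abreu's equation and the Guillemin conditions are invariant under the addition of affine functions, this costs nothing. The $M$-condition, together with convexity and the boundary behavior dictated by $\sigma(\alpha)$, then yields uniform $L^\infty_{\mathrm{loc}}$ bounds on $u(\alpha)$ over compact subsets of $\mathrm{int}(P)$; convexity upgrades these to uniform Lipschitz bounds. Arzelà-Ascoli produces a subsequence converging locally uniformly to a convex function $u$ on $\mathrm{int}(P)$.

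The heart of the argument, and the step I expect to be the main obstacle, is establishing interior regularity uniform in $\alpha$. Since Abreu's equation is fourth order and its leading part is the linearization of the Monge-Amp\`ere operator, one needs uniform two-sided Hessian bounds on every compact subset $K \Subset \mathrm{int}(P)$. The strategy is to combine integral estimates controlling $\int_K \log\det(u(\alpha)_{ij})$ with the equation, rewritten as $\partial_i\partial_j U(\alpha)^{ij}=-A(\alpha)$ for the cofactor matrix $U(\alpha)^{ij}$, and to exploit the uniform bound on $A(\alpha)$ supplied by convergence. Once $(u(\alpha)_{ij})$ is uniformly bounded above and below on $K$, the Abreu equation is uniformly elliptic there, and classical Schauder theory produces $C^{k,\beta}(K)$ bounds independent of $\alpha$. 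Passing to the limit then gives $u\in C^\infty(\mathrm{int}(P))$ strictly convex with $u^{ij}_{\ ij}=-A$.

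Finally I would treat the boundary behavior. The Guillemin condition forces $u(\alpha)$ to have an explicit singular part of the form $\tfrac12\sum_k \ell_k^{(\alpha)} \log \ell_k^{(\alpha)}$ along each facet, where the $\ell_k^{(\alpha)}$ are the defining affine functions of $P(\alpha)$ normalized by $\sigma(\alpha)$; the remainder is smooth up to the boundary. Since the number of facets is constant in $\alpha$ and $(P(\alpha),\sigma(\alpha))\to (P,\sigma)$, the singular parts converge explicitly. Localizing near the relative interior of each face and reapplying the interior regularity argument in a half-space model shows the smooth remainders converge in $C^k$, so the limit $u$ inherits the Guillemin boundary conditions of $(P,\sigma)$, completing the proof.
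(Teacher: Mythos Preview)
The paper does not give its own proof of this theorem; it is quoted as Donaldson's result from \cite{D3} and serves only as background for the paper's main theorem. That said, Section~4 of the paper reproduces (and extends to dimension $n$) the geometric estimates from \cite{D3} that drive Donaldson's argument, so a comparison is still possible.

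Your outline departs from Donaldson's method at the crucial step, and the departure is a genuine gap rather than an alternative route. You propose to obtain uniform two-sided Hessian bounds on compacta by ``integral estimates controlling $\int_K \log\det(u(\alpha)_{ij})$ combined with the equation.'' This is too vague to carry weight, and it is not how the bounds are actually produced. Donaldson's mechanism is geometric and hinges on the curvature tensor $|F|^2=u^{ab}{}_{cd}\,u^{cd}{}_{ab}$, which you do not mention. The upper Hessian bound comes from the $M$-condition \emph{together with} a pointwise curvature bound (Lemma~\ref{Upper bound of Hessian u} in the paper); the $M$-condition alone is not enough. The lower bound on $\det(u_{ij})$---equivalently, non-collapsing---is obtained by a blow-up argument: one rescales at points of maximal curvature, uses the $M$-condition's scale invariance and an $L^2$ curvature bound to extract a limit on $\mathbb{R}^2$ or a half-plane, and then invokes Liouville-type theorems to derive a contradiction. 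None of this appears in your sketch, and without it there is no clear path to the uniform ellipticity you need before invoking Schauder theory.

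Your boundary step is also optimistic. Saying the singular parts converge ``explicitly'' and that one can ``reapply the interior regularity argument in a half-space model'' elides the real issue: one must show the smooth remainders stay uniformly controlled up to the boundary and through the corners, which in \cite{D3} again relies on the geometric estimates (distance comparisons, Jacobi-field bounds such as Lemmas~\ref{Riemannian distance controlled by Euclidean distance}--\ref{Elliptic balls}) rather than on a generic localization.
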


Inspired by this work of Donaldson, we attempt to study the Calabi flow on an $n$-dimensional toric variety. The main theorem we obtain is

\begin{thm}
\label{Calabi flow} For any toric K\"ahler variety, the Calabi flow (initiated from any toric invariant
metric) can be extended indefinitely as long as the following assumptions hold

\begin{itemize}
\item The L$^n$-norm of the Riemannian curvature is bounded for any finite time interval $[0,T)$.
\item At each time $t \in [0, T)$, after rescaling the metric by $|Rm|_{\infty}$, the first derivative of the Riemannian curvature is uniformly bounded.
\item The Euclidean norm of the gradient of scalar curvature on the polytope i.e. $|\nabla R|$, are uniformly bounded for all time.
\end{itemize}

\begin{rmk}
For K\"ahler surfaces, the first assumption is automatically true since the Calabi flow decreases the Calabi energy, hence the $L^2$-norm of Riemannian curvature. To get rid of the second assumption, one needs to extend Shi's Ricci flow pointwise curvature estimates \cite{Shi} to the Calabi flow. The last condition is imposed to guarantee that the $M$-condition holds along the Calabi flow. If one can extend Perelman's Ricci flow non-collapsing result \cite{Pe} to the Calabi flow, then the third assumption is not needed. 
\end{rmk}

\end{thm}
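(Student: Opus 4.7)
The plan is to argue by contradiction: suppose $[0,T)$ is the maximal interval of existence of the Calabi flow with $T<\infty$. By the extension criterion of Chen--He \cite{ChenHe}, a singular time can only occur if $|Rm|_\infty$ blows up as $t\to T$, and I will rule this out by combining the three hypotheses with the torus symmetry, then producing a smooth limiting symplectic potential at $t=T$ to which short-time existence can be reapplied.

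Since every metric along the flow is $T^n$-invariant, the flow descends to a fourth-order parabolic equation for the symplectic potential $u(t)$ on the interior of $P$ with Guillemin boundary data prescribed by $(P,\sigma)$. Extending past $T$ is therefore equivalent to producing a smooth limit $u(T)\in C^\infty(\mathrm{Int}\,P)$ satisfying those boundary conditions. The first step is to use the third hypothesis to show that the $M$-condition (Definition \ref{M}) is preserved along the flow with a uniform constant: a uniform bound on $|\nabla R|$ on $P$ controls the variation of the relevant boundary integrals of $u(t)$, yielding exactly the inequality required by Definition \ref{M}. This makes Donaldson's interior compactness results (cf.\ Theorem \ref{M2} and \cite{D3, D4}) applicable to the family $\{u(t)\}_{t<T}$, reducing the extension problem to an interior regularity question.

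The second step is to upgrade the spatial $L^n$ bound of hypothesis~1 to a uniform $L^\infty$ bound on $|Rm|$ using hypothesis~2. If $M_k:=|Rm|_\infty(t_k)\to\infty$ along some $t_k\to T$, rescale to $\tilde g_k=M_k\,g(t_k)$ so that $|\widetilde{Rm}|_\infty=1$ and, by hypothesis~2, $|\widetilde\nabla\widetilde{Rm}|$ is uniformly bounded. Since $\int|Rm|^n\,dV$ is scale invariant in real dimension $2n$, the spatial $L^n$-norm of $\widetilde{Rm}$ inherits the uniform bound from hypothesis~1, and the $T^n$-symmetry together with the $M$-condition from the first step should supply enough effective non-collapsing to extract a complete, nontrivial, $T^n$-invariant Cheeger--Gromov limit $(X_\infty,\tilde g_\infty)$. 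That limit has $|\widetilde{Rm}|_\infty=1$ but finite scale-invariant $L^n$ curvature integral, and combining this with the asymptotic structure forced by the $M$-condition in the limit contradicts the concentration of curvature implied by $M_k\to\infty$.

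Once $|Rm|_\infty$ is uniformly bounded on $[0,T)$, the Chen--He criterion gives a smooth extension past $T$, contradicting maximality. I expect the principal obstacle to be the blow-up step: no non-collapsing estimate analogous to Perelman's is available for the Calabi flow (precisely the point flagged in the remark following the theorem), so the injectivity-radius lower bound needed on the rescaled metrics must come from the torus symmetry and the polytope-side $M$-condition acting together, making the interplay between the polytope-side compactness for $u(t)$ and the Riemannian-side curvature estimates the true heart of the argument rather than two independent ingredients.
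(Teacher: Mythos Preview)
Your overall architecture is right: argue by contradiction, use the Chen--He criterion so that $|Rm|_\infty$ must blow up at $T$, preserve the $M$-condition via the bound on $|\nabla R|$, and perform a point-picking rescaling at curvature maxima. The uses you make of hypotheses 1 and 2 (scale-invariance of the $L^n$ curvature integral, uniform control of $|\nabla Rm|$ after rescaling) are also correct. However, the heart of the argument in your proposal is a genuine gap.

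The contradiction you sketch---a Cheeger--Gromov limit with $|\widetilde{Rm}|_\infty=1$ and finite $L^n$ curvature integral---is not by itself contradictory, and invoking ``asymptotic structure forced by the $M$-condition'' does not close it. The paper does not extract a Riemannian Cheeger--Gromov limit at all; it works entirely on the polytope side. After rescaling, the polytopes $\tilde P^{(i)}$ converge (up to subsequence) either to all of $\mathbb{R}^n$ or to a product $(\mathbb{R}^+)^m\times\mathbb{R}^{n-m}$, and the geometric estimates of Section~4 (the $M$-condition, the bound $|F|\le 1$, and the $L^n$ curvature bound combined with the derivative-of-curvature bound to get a lower bound on $\det u_{ij}$ at the basepoint) give uniform two-sided bounds on $\tilde u^{(i)}_{jk}$ on compacta. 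One also needs that the scalar curvature is uniformly bounded before rescaling (this follows from the Calabi-energy monotonicity together with the $|\nabla R|$ bound, an easy step you omit), so that after rescaling the limit potential $U$ satisfies the zero-scalar-curvature Abreu equation $U^{ij}{}_{ij}=0$. The actual contradiction then comes from two Liouville-type propositions proved in Section~5: there is no convex function on $\mathbb{R}^n$, respectively on $(\mathbb{R}^+)^m\times\mathbb{R}^{n-m}$, satisfying $U^{ij}{}_{ij}=0$, $|F|\le 1$, and the $M$-condition (plus a boundary-blow-up condition in the half-space case). These nonexistence results, proved via the maximum principle applied to $G=1/\det(u_{ij})$ (or its Legendre dual), are the missing key lemma in your plan. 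Finally, Theorem~\ref{M2} is Donaldson's continuity-method statement and is not invoked in the proof; the relevant input is the collection of pointwise geometric estimates in Section~4, which the paper extends from $n=2$ to general $n$.
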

The organization of this paper is as follows: In section 2, we set up the notations for the K\"ahler geometry and the Calabi flow as in \cite{ChenHe}. In section 3, we give a brief introduction to polarized toric varieties and study the Calabi flow in the corresponding polytope. In section 4, we extend Donaldson's geometrical estimates \cite{D3} from dimension two to higher dimensions. In section 5, we rule out the singularities in the Calabi flow under the assumptions of Theorem \ref{Calabi flow}.  \\

{\bf Acknowledgment} This paper is a part of the author's PhD thesis. The author would like to thank his advisor Xiuxiong Chen for drawing his attention to this problem and sharing insightful ideas with him. He also wants to thank Professor Pengfei Guan and Professor Vestislav Apostolov for many useful suggestions. He is grateful to Fang Yuan and Song Sun for discussions on related topics. He would like to express his gratitude to the referee for careful readings and many suggestions.

\section{K\"ahler geometry and Calabi flow}

Let $M$ be a compact complex manifold of complex dimension $n$. A Hermitian metric metric $g$ on $M$ in local coordinates is given by
$$
g=g_{i \bar{j}} d z^i \otimes d z^{\bar{j}}
$$
where $\{ g_{i \bar{j}} \}$ is a positive definite Hermitian matrix with smooth dependence on the coordinates. We use $\{ g^{i \bar{j}} \}$ to denote the inverse matrix of $\{ g_{i \bar{j}} \}$. The K\"ahler condition says that the corresponding K\"ahler form $\omega = \sqrt{-1} g_{i\bar{j}} dz^i \wedge dz^{\bar{j}}$ is a closed $(1,1)$ form. The K\"ahler class of $\omega$ is its cohomology class $[\omega] \in H^2(M,\mathbb{R})$. By  Hodge theory, any other K\"ahler form in the same class is of the form
$$
\omega_{\phi} = \omega + \sqrt{-1} \partial \bar{\partial} \phi > 0,
$$
for some real valued function $\phi$ on $M$, where
$$
\partial \bar{\partial} \phi = \sum_{i,j=1}^n \partial_i \partial_{\bar{j}} \phi d z^i \wedge d z^{\bar{j}} = \phi_{,i\bar{j}} dz^i \wedge d z^{\bar{j}}.
$$

The corresponding K\"ahler metric is denoted by $g_{\phi} = (g_{i \bar{j}} + \phi_{,i \bar{j}}) d z^i \otimes d z^{\bar{j}}$, and we use $\{g^{i \bar{j}}_{\phi} \}$ to denote the inverse matrix of $\{g_{i \bar{j}} + \phi_{,i \bar{j}} \}$. For simplicity, we use both $g$ and $\omega$ to denote the K\"ahler metric. The space of K\"ahler potentials is defined to be
$$
\mathcal{H}_{\omega} = \{\phi \in C^{\infty} (M) | \omega_{\phi} = \omega + \sqrt{-1} \partial \bar{\partial}\phi > 0 \},
$$
which is identified with the space of K\"ahler metrics and is the main objects we are interested in.

Given a K\"ahler metric $\omega$, its volume form is
$$
\omega^n = \frac{(\sqrt{-1})^n}{n!} \det(g_{i\bar{j}}) dz^1 \wedge d z^{\bar{1}} \wedge \cdots \wedge d z^n \wedge d z^{\bar{n}}.
$$
The Ricci curvature of $\omega$ is locally given by
$$
R_{i\bar{j}} = - \partial_i \partial_{\bar{j}} \log \det (g_{k\bar{l}}),
$$
the Ricci form being
$$
Ric_{\omega} = \sqrt{-1} R_{i\bar{j}} d z^i d z^{\bar{j}} = - \sqrt{-1} \partial_i \partial_{\bar{j}} \log \det (g_{k\bar{l}}).
$$
It is a real, closed (1,1) form. The cohomology class of the Ricci form is the first Chern class $C_1(M)$, and is therefore independent of the metric.

Given a polarized compact K\"ahler manifold $(M, [\omega])$, for any $\phi \in \mathcal{H}$, Calabi \cite{Ca1}, \cite{Ca2} introduced the Calabi functional,
$$
Ca(\omega_{\phi}) = \int_M R^2_{\phi} \omega^n_{\phi},
$$
where $R_{\phi}$ is the scalar curvature of $\omega_{\phi}$. Note that both the total volume
$$
V_{\phi} = \int_M \omega_{\phi}^n
$$
and the total scalar curvature
$$
S_{\phi} = \int_M R_{\phi} \omega_{\phi}^n
$$
remain unchanged when $\phi$ varies in $\mathcal{H}_{\omega}$. As a consequence, the average scalar curvature
$$
\underline{R} = \frac{S_{\phi}}{V_{\phi}}
$$
is a constant depending only on the class $[\omega]$. Usually we use the following modified Calabi energy
$$
\widetilde{Ca} (\omega_{\phi}) = \int_M (R_{\phi} - \underline{R})^2 \omega_{\phi}^n
$$
to replace $Ca(\omega_{\phi})$ since they only differ by a topological constant. E. Calabi studied the variational problem to minimize $Ca(\omega_{\phi})$ in $\mathcal{H}_{\omega}$. The critical points turn out \cite{Ca2} to be either CscK or extremal K\"ahler metric depending on whether the Futaki character vanishes or not. The Futaki character $f = f_{\phi} : \mathfrak{h} (M) \rightarrow \mathbb{C}$ is defined on the Lie algebra $\mathfrak{h} (M)$ of all holomorphic vector fields of $M$ as follows,
$$
f_{\phi} (X) = - \int_M X(F_{\phi}) w_{\phi}^n,
$$
where $ X \in \mathfrak{h}$ and $F_{\phi}$ is a real valued function defined by
$$
F_{\phi} = G_{\phi} ( R_{\phi}).
$$
$G_{\phi}$ is the Hodge-Green integral operator, and $F_{\phi}= G_{\phi} ( R_{\phi}) $ is equivalent to $\triangle_{\phi} F_{\phi} = R_{\phi} - \underline{R},$ where $\triangle_{\phi}$ is the Laplace operator of the metric $\omega_{\phi}$. In \cite{Ca2}, E. Calabi showed that the Futaki character $f=f_{\phi}$ is invariant when $\phi$ varies in $\mathcal{H}_{\omega}$.

The existence of CscK metrics (or extremal K\"ahler metrics) seems intractable at the first glance since the equation is a fully nonlinear 4th order equation. In \cite{Ca1}, E. Calabi proposed the so-called Calabi flow to approach the existence problem. The Calabi flow is the gradient flow of the Calabi functional, defined as the following parabolic equation with respect to a real parameter $t \geq 0$,

$$
\frac{\partial}{\partial t} g_{i\bar{j}}(t) = \partial_i \partial_{\bar{j}} R_{g(t)}.
$$
On the potential level, the Calabi flow is of the form
$$
\frac{\partial \phi}{\partial t} = R_{\phi} - \underline{R}.
$$
Under the Calabi flow, we have
$$
\frac{d}{dt} \int_M (R_{\phi} - \underline{R})^2 \omega_{\phi}^n = -2 \int_M (D_{\phi} R_{\phi}, R_{\phi}) \omega_{\phi}^n,
$$
where $D_{\phi}$ is the Lichn\'{e}rowicz operator defined by
$$
D_{\phi} f = f_{,\alpha \beta}^{\quad \alpha \beta},
$$
and where the covariant derivative is with respect to $\omega_{\phi}$. So the Calabi energy is strictly decreasing along the flow unless $\omega_{\phi}$ is an extremal K\"ahler or a CscK metric.

\section{Toric geometry and Calabi flow}
In this section, we use the description of toric manifolds which is due to Guillemin \cite{G1} \cite{G2} and Abreu \cite{A2}, see also Donaldson \cite{D1} and Apostolov-Calderbank-Gauduchon \cite{ACG}. Given a $n$-dimensional polarized toric variety $X$ with K\"ahler form $\omega$ and Hamiltonian action of an $n$-dimensional torus action $\mathbb{T}^n$, we denote its moment map by $\mu$. The image of the moment map is a Delzant polytope $P$ in $\mathbb{R}^n$. Let $X_0 = \mu^{-1} (P_0)$, where $P_0$ is the interior of $P$; $X_0$ is a dense open subset of $X$ diffeomorphic to $\mathbb{R}^n \times \mathbb{T}^n$. Also, the preimage of each boundary face of $P$ corresponds to a divisor of $X$. A model case is $\mathbb{CP}^2$; up to an appropriate normalization, the image of its moment map can be taken to be the triangle in $\mathbb{R}^2$ with vertices $(0,0),(0,1),(1,0)$ and the preimage of each facet is a projective line $\mathbb{CP}^1 \subset \mathbb{CP}^2$.

The K\"ahler form $\omega$ restricted to $X_0$ can be written as 

$$
\omega= \sqrt{-1} \phi_{,ij} \ d z^i \wedge d z^{\bar{j}},
$$

where $z_i=\xi_i + \sqrt{-1} \eta_i,\ \xi_i \in \mathbb{R}^n,\ \eta_i \in \mathbb{T}^n$ and 

$$
\phi_{,ij}=\frac{\partial^2 \phi}{\partial \xi_i \partial \xi_j}.
$$

We can write down the moment map explicitly in this case: $$x=\mu (z)=\mu (\xi)=(\frac{\partial \phi}{\partial \xi_1},\ldots,\frac{\partial \phi}{\partial \xi_n} ).$$Using the Legendre transformation, we obtain the symplectic potential $u$ on $P_0$: For each point $x \in P_0$, there is a unique point $\xi \in \mathbb{R}^n$ such that $\frac{\partial \phi}{\partial \xi_i} = x_i$ and we let 

$$
u(x) = \sum x_i \xi_i - \phi(\xi).
$$

It is important to point out that $u(x)$ should satisfy the Guillemin boundary conditions by Abreu \cite{A2}, Donaldson \cite{D1} and Apostolov-Calderbank-Gauduchon \cite{ACG}. Let $d$ be the number of $(n-1)$-dimensional faces of $P$, we can describe the polytope $P$ by a set of inequalities

$$
\l_i(x)=\langle x, u_i \rangle - \lambda_i \geq 0, \ i=1, \ldots, d;
$$
the $u_i$ being primitive elements of the lattice $\mathbb{Z}^n$.

Let 
$$
u_0(x) = \frac{1}{2} \sum^d_{k=1} l_k(x) \log l_k(x).
$$

\begin{Def} {\label G}
$u(x)$ satisfies the Guillemin boundary conditions if and only if $u(x)-u_0(x)$ is a smooth function on $P_0$ up to the boundary and $u(x)$ restricted to each facet is smooth and strictly convex.
\end{Def}

Next we want to write down the formula for Riemannian curvature in symplectic coordinates which is due to Donaldson \cite{D1} and Abreu \cite{A1}. In symplectic coordinates the metric is given by $$g = \sum u_{,ij} d x_i d x_j + \sum u^{,ij} d \xi_i d \xi_j, $$ where the matrix
$(u^{,ij})$ is the inverse of the Hessian matrix $u_{,ij} = \frac{\partial^2 u}{\partial x_i \partial x_j}$. 

Now we define a 4-index tensor by $$F^{ab}_{kl}=-{u^{ab}}_{kl}.$$ We can raise and lower indices in the usual way, using
the metric $u_{ij}$, setting $$F^{abcd}=u^{ck}u^{dk}F^{ab}_{kl}, \quad F_{ijkl}=u_{ia}u_{jb}F^{ab}_{kl}.$$

\begin{lemma} (Donaldson \cite{D1})
The curvature tensor of $g$ is $$-F^{ijkl}d z_i d \bar{z}_k \otimes d z_j d\bar{z}_l.$$
\end{lemma}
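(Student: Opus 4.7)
The plan is to derive the curvature formula by starting from the standard Kähler curvature expression in complex coordinates and then translating to symplectic coordinates via the Legendre duality between $\phi(\xi)$ and $u(x)$.

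First, I would recall the standard Kähler curvature formula
\[
R_{i\bar j k \bar l} = -\partial_k \partial_{\bar l} g_{i\bar j} + g^{p\bar q}\,(\partial_k g_{i\bar q})(\partial_{\bar l} g_{p \bar j}).
\]
By toric invariance, the potential $\phi$ depends only on $\xi=\mathrm{Re}\,z$, so the operators $\partial_k$ and $\partial_{\bar l}$ reduce to half of the real partial derivatives $\partial/\partial\xi_k$ and $\partial/\partial\xi_l$. This turns the problem into an ordinary real computation on the dual picture.

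Next, I would use the Legendre duality to transport everything to the polytope. The two basic facts I need are $g_{i\bar j}(\xi)=u^{,ij}(x(\xi))$ (the Hessian of $\phi$ is the inverse Hessian of $u$) and $\partial x_m/\partial\xi_k = u^{,mk}(x)$, so that $\partial/\partial\xi_k = u^{,km}\,\partial/\partial x_m$ on pulled-back functions. Differentiating the identity $u^{,ij}u_{,jk}=\delta^i_k$ once gives
\[
u^{,ij}{}_{,m} = -u^{,ip}u^{,jq}u_{,pqm},
\]
which lets me express first derivatives of the inverse Hessian in terms of third derivatives of $u$.

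Then I would compute $\partial_{\xi_k}\partial_{\xi_l}g_{i\bar j}$ by applying the chain rule twice. This produces the pure fourth-derivative contribution $u^{,mk}u^{,nl}u^{,ij}{}_{,mn}$ plus a cross term coming from differentiating the $x$-dependent coefficient $u^{,nl}$. A parallel calculation, using the identity above, shows that the nonlinear Christoffel-type term $g^{p\bar q}(\partial_k g_{i\bar q})(\partial_{\bar l}g_{p\bar j})$ reproduces that cross term with the opposite sign. The two contributions cancel, leaving
\[
R_{i\bar j k \bar l} \;=\; -\tfrac{1}{4}\,u^{,ij}{}_{,mn}\,u^{,mk}u^{,nl}
\]
(up to the conventional factor of $1/4$ from converting complex to real derivatives). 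Unwinding the definition $F^{ab}_{\ kl}=-u^{,ab}{}_{,kl}$ and raising/lowering the remaining indices consistently via $u_{,ij}$ and $u^{,ij}$ identifies this with $-F^{ijkl}\,dz_i\,d\bar z_k\otimes dz_j\,d\bar z_l$, and the result then matches the statement after using the usual Kähler symmetries $R_{i\bar j k\bar l}=R_{k\bar j i\bar l}=R_{i\bar l k\bar j}$.

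The main obstacle is purely bookkeeping: one must keep the index positions and the factors $u^{,mk}$ produced by the chain rule carefully aligned so that the cancellation between the Christoffel quadratic term and the cross terms from the chain rule is manifest. Once one commits to raising all indices with $u^{,ij}$ throughout and stays consistent, everything collapses to a short matrix-calculus identity.
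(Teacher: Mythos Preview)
The paper does not actually give a proof of this lemma; it is stated with attribution to Donaldson \cite{D1} and then used without argument. So there is no proof in the paper to compare against. Your overall strategy---write the K\"ahler curvature in the $\xi$-coordinates via the standard formula
\[
R_{i\bar j k\bar l}=-\partial_k\partial_{\bar l}g_{i\bar j}+g^{p\bar q}\,\partial_k g_{i\bar q}\,\partial_{\bar l}g_{p\bar j},
\]
use toric invariance to replace complex by real $\xi$-derivatives, and then push everything to the polytope through the Legendre identities $g_{i\bar j}=u^{,ij}$ and $\partial/\partial\xi_k=u^{,km}\partial/\partial x_m$---is exactly the right route, and is essentially how the computation is carried out in Donaldson's paper.

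There is, however, a genuine slip in the mechanism you describe. The ``cross term'' $u^{,nl}(u^{,mk})_{,n}(u^{,ij})_{,m}$ coming from the second application of the chain rule is \emph{not} equal to the Christoffel quadratic term $u_{,pq}\,u^{,mk}(u^{,iq})_{,m}\,u^{,nl}(u^{,pj})_{,n}$. A quick test shows this: take $n=2$, $u=\tfrac12(x_1^2+x_2^2)+\epsilon x_1^2x_2$, and evaluate both expressions at the origin with $i=1$, $j=2$, $k=1$, $l=2$; the cross term equals $-4\epsilon^2$ while the Christoffel term vanishes. So the two do not cancel, and the curvature is not literally the single ``pure'' contribution $-\tfrac14 u^{,ij}{}_{,mn}u^{,mk}u^{,nl}$ under the index assignment you wrote. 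What actually happens is that when one expands \emph{all} terms in third and fourth derivatives $u_{,abc}$, $u_{,abcd}$, several pieces recombine (not a single pairwise cancellation) to produce the tensor the paper calls $-F^{ijkl}$, with the particular index placement $dz_i\,d\bar z_k\otimes dz_j\,d\bar z_l$ indicated there. Your closing remark that ``the main obstacle is purely bookkeeping'' is accurate, but the bookkeeping is a bit more intricate than the one-step cancellation you outlined; you will need to track the index pairings carefully to see the correct recombination.
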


Using the metric $u_{ij}$, the standard square-norm of the tensor $F$ is
$$|F|^2=F^{ij}_{kl}F^{ab}_{cd}u_{ia}u_{jb}u^{kc}u^{ld}={u^{ab}}_{cd}{u^{cd}}_{ab}.$$

Following \cite{A1}, the scalar curvature can be written as: $$S(u)=-\sum_{ij}\frac{\partial^2 u^{,ij}}{\partial x_i \partial x_j}. $$

Let us see what the evolution equation of $u$ under the Calabi flow would be. Since the evolution equation for $\phi$ is $$\frac{\partial \phi}{\partial t}=R - \underline{R} = A - \underline{A},$$ where $\underline{R}=\underline{A}$ is the average of scalar curvature $R$, 
 Donaldson shows that \cite{D1}, $$\frac{\partial u}{\partial t}=-\frac{\partial \phi}{\partial t}=-A+\underline{A}.$$
 
In \cite{D3}, Donaldson introduces the $M$-condition to control the injectivity radius of $X$:
 
\begin{Def} {\label M}
Let $p, q$ be distinct points in the interior of $P$. Let $\nu$ be the unit vector pointing in the
direction from $p$ to $q$. We write $$V(p,q)=(\nabla_{\nu} u)(q)-(\nabla_{\nu} u)(p),$$ where $\nabla_{\nu}$ denotes
the derivative in the direction $\nu$. Thus $V(p,q)$ is positive by the convexity condition. Let $I(p,q)$ be the line
segment $$I(p,q)=\{\frac{p+q}{2}+t(p-q):-3/2 \leq t \leq 3/2\}.$$

For $M > 0$ we say that the symplectic potential $u$ satisfies the $M$-condition if for any $p,q$ such that $I(p,q) \subset P$ we have $V(p,q) \leq M$.
\end{Def}

\begin{lemma}
\label{Init}
For any convex function $u$ on a polytope $P \subset \mathbb{R}^n$ satisfying the Guillemin
boundary conditions, there exits a constant $M$ such that $u$ satisfies the $M$-condition.
\end{lemma}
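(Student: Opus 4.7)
The plan is to decompose $u = u_0 + v$, where $u_0(x) = \frac{1}{2}\sum_{k=1}^{d} l_k(x)\log l_k(x)$ is the Guillemin reference potential and $v = u-u_0 \in C^\infty(\bar P)$ by hypothesis. Since $\bar P$ is compact, $|\nabla v|$ admits a uniform bound $C_1$ on $\bar P$, so the contribution of $v$ to $V(p,q) = (\nabla_\nu u)(q)-(\nabla_\nu u)(p)$ is at most $2C_1$ for any choice of $p,q \in P_0$. The whole problem therefore reduces to bounding $(\nabla_\nu u_0)(q)-(\nabla_\nu u_0)(p)$ for pairs $(p,q)$ with $I(p,q)\subset P$.

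A direct differentiation gives $\nabla u_0(x) = \tfrac{1}{2}\sum_k u_k\bigl(\log l_k(x)+1\bigr)$, so that
$$
(\nabla_\nu u_0)(q)-(\nabla_\nu u_0)(p) = \tfrac{1}{2}\sum_{k=1}^d \langle u_k,\nu\rangle\,\log\frac{l_k(q)}{l_k(p)}.
$$
The key step is to use the inclusion $I(p,q)\subset P$ to bound each ratio $l_k(q)/l_k(p)$ from above and below. Unwinding Donaldson's definition, the endpoints $2p-q$ and $2q-p$ of $I(p,q)$ both lie in $P$. Since each $l_k$ is affine, the inequality $l_k(2q-p) \geq 0$ rewrites as $l_k(q)\geq\tfrac12 l_k(p)$; by symmetry also $l_k(p)\geq\tfrac12 l_k(q)$, so $|\log(l_k(q)/l_k(p))|\leq \log 2$. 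Combined with $|\langle u_k,\nu\rangle|\leq |u_k|$, this bounds the $u_0$-contribution to $V(p,q)$ by $\tfrac{\log 2}{2}\sum_k |u_k|$, a constant determined entirely by the polytope data.

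Adding the two estimates yields a uniform bound $M = 2C_1 + \tfrac{\log 2}{2}\sum_k|u_k|$ valid for every admissible pair $(p,q)$. The only nontrivial point in the argument is the extraction of the factor $\tfrac12$ from the dilated segment $I(p,q)$; this is exactly the role of the constant $3/2$ in Donaldson's definition, and no deeper convexity information about $u$ is required. The lemma is thus a soft consequence of the Guillemin boundary condition, the resulting $M$ depending only on the fixed lattice data $\{u_k\}$ and on $\|u-u_0\|_{C^1(\bar P)}$.
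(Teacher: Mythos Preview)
Your proof is correct and rests on the same underlying idea as the paper's: split $u = u_0 + v$ with $v \in C^\infty(\bar P)$, bound the $v$-contribution by compactness of $\bar P$, and control the $u_0$-contribution via the logarithmic structure of the Guillemin potential. The execution differs, however. The paper localizes near a corner, passes to the model function $\sum_i x_i\log x_i$ on $(\mathbb{R}^+)^m$, and verifies only the special pair $(x,2x)$ along the radial direction toward the corner, asserting that this suffices ``without loss of generality.'' Your argument is both more direct and more complete: you treat all facets and all admissible pairs $(p,q)$ simultaneously by observing that the affinity of each $l_k$ together with $2q-p,\,2p-q \in P$ forces $\tfrac12 \le l_k(q)/l_k(p) \le 2$, which is precisely what bounds each $\log$ term by $\log 2$. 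This bypasses the reduction step entirely and makes the dependence of $M$ on the polytope data $\{u_k\}$ and on $\|u-u_0\|_{C^1(\bar P)}$ explicit.
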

\begin{proof}
Near any boundary of $P$, $u$ can be expressed as $$u = \frac{1}{2} (l_{E_1} \log l_{E_1} + \cdots + l_{E_m} \log l_{E_m}) + f$$ where $E_i$ is a facet of $P$ and $l_{E_i}$ is the defining function for $E_i$. Without loss of generality, we only need to prove that for the function
\begin{eqnarray*}
u &:& \mathbb{R}^m \to \mathbb{R}\\
u(x_1,\ldots,x_m)&=&x_1 \log x_1 + \cdots + x_m \log x_m
\end{eqnarray*}
and for any point $(x_1,\ldots,x_m)$ where $x_i >0$ for all $i$, the difference of derivatives of $u$ at $(x_1,\ldots,x_m)$ and $(2x_1,\ldots,2x_m)$ in the direction of the unit vector 
$$
v = \frac{(-x_1, \dots, -x_m)}{\sqrt{x_1^2 + \cdots + x_m^2}}
$$ is uniformly bounded. In fact
\begin{eqnarray*}
& & \ \nabla_{\nu}u(x_1,\ldots,x_m)-\nabla_{\nu}u(2x_1,\ldots,2x_m)\\
&=& (\log x_1 - \log 2x_1)\frac{-x_1}{\sqrt{x_1^2+\cdots+x_m^2}} + \\ 
& & \cdots + (\log x_m - \log 2x_m)\frac{-x_m}{\sqrt{x_1^2+\cdots+x_m^2}}\\
&\leq& m\log 2.
\end{eqnarray*}
\end{proof}

\section{Geometric Estimates}
The estimates in this section follow Donaldson's work in toric surfaces \cite{D3}. However, there are some lemmas needed to be rewritten in order to deal with the cases when $n > 2$. For the reader's convenience, we put together our results with Donaldson's work.

\begin{lemma}
\label{Riemannian distance controlled by Euclidean distance} Suppose $u$ satisfies the $M$-condition. Let $I$ be a line
segment in $\bar{P}$ with mid-point $p$ and let $p'$ be an end point of $I$. Then the Riemannian length of the segment
$pp'$ is at most $$\frac{1}{\sqrt{2}-1}\sqrt{M |p-p'|_{\rm Euc}}.$$
\end{lemma}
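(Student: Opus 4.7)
The plan is to estimate the Riemannian length of $pp'$, namely $\int_0^L \sqrt{u_{\nu\nu}(p+s\nu)}\,ds$, by a dyadic decomposition of the segment combined with Cauchy--Schwarz, using the $M$-condition to control the integrated second directional derivative. Here $L:=|p-p'|_{\rm Euc}$ and $\nu:=(p'-p)/L$. The key observation is that, along a line in direction $\nu$, we have $\frac{d}{ds}(\nabla_\nu u) = u_{\nu\nu}$, so for any subsegment the integral $\int u_{\nu\nu}\,ds$ is exactly $V(\cdot,\cdot)$ of its endpoints, hence bounded by $M$ as soon as the $3/2$-extension of that subsegment lies in $P$.

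Introduce the dyadic points $p_k := p+L(1-2^{-k})\nu$ for $k\ge 0$, so $p_0=p$, $p_k\to p'$, and $|p_{k+1}-p_k|_{\rm Euc}=L\cdot 2^{-k-1}$. A direct computation shows that $I(p_k,p_{k+1})$ lies along the line through $p$ and $p'$, with one endpoint equal to $p'$ and the other located at $p+L(1-3\cdot 2^{-k-1})\nu$, which sits inside the original segment $I$ joining $q:=2p-p'$ and $p'$; hence $I(p_k,p_{k+1})\subset I\subset\bar{P}$. Applying Cauchy--Schwarz on each piece,
\begin{equation*}
\int_{p_k}^{p_{k+1}}\sqrt{u_{\nu\nu}}\,ds \le \sqrt{L\cdot 2^{-k-1}}\cdot\sqrt{V(p_k,p_{k+1})}\le \sqrt{LM/2}\cdot 2^{-k/2},
\end{equation*}
and summing the resulting geometric series gives
\begin{equation*}
\sum_{k\ge 0}\sqrt{LM/2}\cdot 2^{-k/2}=\sqrt{LM/2}\cdot\frac{\sqrt{2}}{\sqrt{2}-1}=\frac{\sqrt{LM}}{\sqrt{2}-1},
\end{equation*}
which is precisely the claimed bound.

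The one subtlety is that if $p'$ (or $q$) lies on $\partial P$ then $I(p_k,p_{k+1})$ only sits in $\bar{P}$, so the $M$-condition is not directly applicable. I would handle this by first running the whole argument on the symmetrically truncated segment obtained by replacing $p'$ with $p'_\epsilon:=p'-\epsilon\nu$ and $q$ with $q+\epsilon\nu$: now every extension lies in $P$, the $M$-condition applies verbatim, and the bound reads $\frac{1}{\sqrt{2}-1}\sqrt{M(L-\epsilon)}$. Letting $\epsilon\downarrow 0$ and using monotone convergence of $\int_0^{L-\epsilon}\sqrt{u_{\nu\nu}}\,ds$ (the integrand is nonnegative and smooth on $P_0$) gives the bound on the full segment; in particular, the uniform estimate shows the Riemannian length is finite even when $p'\in\partial P$. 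Thus the main content of the proof is the dyadic scaling — choosing the geometric rate $2^{-k}$ exactly matches the $\sqrt{\cdot}$ from Cauchy--Schwarz to produce a summable series — while the boundary case is a routine limiting technicality.
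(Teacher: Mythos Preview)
Your proof is correct and follows essentially the same route as the paper: the same dyadic subdivision $p_k=2^{-k}p$ (in the paper's coordinates), Cauchy--Schwarz on each piece, the $M$-condition to bound $\int u_{\nu\nu}$, and summation of the resulting geometric series. Your treatment is in fact slightly more careful, since you verify explicitly that each $I(p_k,p_{k+1})$ lies in the original segment $I$ and you handle the boundary case $p'\in\partial P$ by truncation and a limit, a point the paper glosses over.
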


\begin{proof}
We can suppose that $p'$ is the origin and that $p$ is $(L,0)$, so $|p-p|_{{\rm Euc}} = L$ and the segment of the
$x_1$-axis from $0$ to $2L$ lies in $\bar{P}$. We apply the definition of the $M$-condition to the pair of points
$p,q$, where $q=(L/2,0).$ This gives $$\int^{L}_{L/2} u_{11}(t,0) dt \leq M.$$

The Riemannian length of the straight line segment from $q$ to $p$ is $$\int^{L}_{L/2} \sqrt{u_{11}}(t,0) dt$$ which is at
most $$\sqrt{L/2} \left ( \int^{L}_{L/2} u_{11}(t,0) dt \right )^{\frac{1}{2}}.$$ Hence the Riemannian length of this
segment is at most $\sqrt{LM/2}$. Replacing $p$ by $2^{-r}p$ and summing over $r$ we see that the Riemannian length of
the segment from 0 to $p$ is at most $$\sqrt{ML}\sum_{r=1}^{\infty}(\frac{1}{\sqrt{2}})^r,$$ from which the result
follows.
\end{proof}

\begin{cor}
Suppose that $u$ satisfies the $M$-condition and that $p$ is a point of $P$. Then $${\rm Dist}_g(p,\partial P) \leq
\frac{1}{\sqrt{2}-1}\sqrt{M {\rm Dist}_{\rm Euc}(p,\partial P)}.$$
\end{cor}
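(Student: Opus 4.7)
The corollary should follow directly from Lemma \ref{Riemannian distance controlled by Euclidean distance} once we produce a favorable line segment. The plan is as follows: given $p \in P$, let $p' \in \partial P$ be a point realizing the Euclidean distance $d := {\rm Dist}_{\rm Euc}(p,\partial P)$, so that $|p-p'|_{\rm Euc} = d$. I would then extend the Euclidean segment from $p'$ to $p$ past $p$ by the same length $d$ to obtain a point $p''$, so that $p$ is the midpoint of the segment $I$ joining $p'$ and $p''$.

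The key geometric input is that $I \subset \bar{P}$. To see this, observe that by the definition of $d$, the open Euclidean ball $B(p,d)$ is contained in $P$, hence $p'' \in \overline{B(p,d)} \subset \bar{P}$. Since $P$ is a (convex) Delzant polytope, any segment with both endpoints in $\bar{P}$ lies in $\bar{P}$; so $I$ is an admissible segment in the sense of the previous lemma, with midpoint $p$ and endpoint $p'$.

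Applying Lemma \ref{Riemannian distance controlled by Euclidean distance} to $I$ then gives that the Riemannian length of the segment from $p$ to $p'$ is bounded by
\[
\frac{1}{\sqrt{2}-1}\sqrt{M\,|p-p'|_{\rm Euc}} \;=\; \frac{1}{\sqrt{2}-1}\sqrt{M\,{\rm Dist}_{\rm Euc}(p,\partial P)}.
\]
Since ${\rm Dist}_g(p,\partial P)$ is the infimum of Riemannian lengths of paths from $p$ to the boundary, and we have exhibited one such path (the straight segment to $p'$, whose other half lies in $\bar{P}$ by the above), we conclude the stated inequality.

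I do not see a substantive obstacle here: the entire content is in choosing the right segment and invoking convexity of $P$ to ensure $I \subset \bar{P}$. The only mild subtlety is checking that $p'$ exists (compactness of $\partial P$) and that the extension $p''$ stays in $\bar{P}$, which is precisely where the Euclidean distance definition and convexity combine.
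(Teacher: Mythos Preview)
Your proof is correct and follows exactly the same approach as the paper: choose $p'\in\partial P$ realizing the Euclidean distance, set $p''=2p-p'$, observe that the segment $p'p''$ lies in $\bar P$, and apply Lemma~\ref{Riemannian distance controlled by Euclidean distance}. You supply slightly more detail (using $\overline{B(p,d)}\subset\bar P$ and convexity) where the paper simply asserts the segment lies in $\bar P$, but the argument is identical.
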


\begin{proof}
To see this we take $p'$ to be the point on $\partial P$ closest to $p$, in the Euclidean metric. If $p''=2p-p'$ then
the segment $p'p''$ lies in $\bar{P}$ and we can apply the lemma above.
\end{proof}

Next we derive a crucial result which relates the restriction of $u$ to lines and the curvature tensor $F$.

\begin{lemma}
At each point $p$ of $P$,
\begin{equation}
\label{Curvature inequality}
\left ( \frac{\partial}{\partial x_1} \right )^2 (u^{-1}_{11})(p) \leq |F|(p).
\end{equation}
\end{lemma}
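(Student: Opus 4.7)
The plan is to normalize coordinates at $p$ so as to diagonalize the Hessian while preserving the $x_1$-direction, and then exploit the resulting simplification. Concretely, I would make a linear change of variables $x = T\tilde x$ with $Te_1 = e_1$ and the remaining columns of $T$ obtained by Gram--Schmidt orthogonalization of $e_2,\ldots,e_n$ against $e_1$ with respect to the inner product $u_{ij}(p)$. Then $\partial/\partial \tilde x_1 = \partial/\partial x_1$ and $\tilde u_{11}(\tilde x) = u_{11}(T\tilde x)$ pointwise, so the left side of the claimed inequality is unchanged at $\tilde p = T^{-1}p$; the right side $|F|(p)$ is a Riemannian invariant, hence also unchanged; and in the new coordinates the matrix $\tilde u_{ij}(\tilde p)$ is diagonal. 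It therefore suffices to prove the inequality under the extra assumption that $u_{ij}(p)$ is diagonal.

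Next I would differentiate the matrix identity $u^{ia}u_{aj} = \delta^i_j$ twice. The general formula
$$
\partial^2 A^{-1} = 2\,A^{-1}(\partial A)\,A^{-1}(\partial A)\,A^{-1} - A^{-1}(\partial^2 A)\,A^{-1},
$$
specialized to $\partial = \partial_1$ at the point $p$, where $(u_{ij})$ is now diagonal, yields the $(1,1)$-entry
$$
\partial_1^2 u^{11}(p) = -\frac{u_{1111}}{u_{11}^2} + \frac{2}{u_{11}^2}\sum_{b=1}^n \frac{u_{11b}^2}{u_{bb}}.
$$
A direct scalar calculation gives
$$
\partial_1^2\bigl(u_{11}^{-1}\bigr)(p) = -\frac{u_{1111}}{u_{11}^2} + \frac{2\,u_{111}^2}{u_{11}^3}.
$$
The $b=1$ contribution to the sum above is exactly $u_{111}^2/u_{11}^3$, while the $b\geq 2$ terms are non-negative because $u_{bb}(p)>0$ by strict convexity. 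Hence $\partial_1^2(u_{11}^{-1})(p) \leq \partial_1^2 u^{11}(p) = -F^{11}_{11}(p)$.

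To close the estimate I would, still in the diagonal frame, expand
$$
|F|^2(p) = \sum_{a,b,k,l} \bigl(F^{ab}_{kl}\bigr)^2\,\frac{u_{aa}\,u_{bb}}{u_{kk}\,u_{ll}},
$$
which is a sum of non-negative terms including $(F^{11}_{11})^2$. Therefore $|F^{11}_{11}(p)| \leq |F|(p)$, and the chain $\partial_1^2(u_{11}^{-1})(p) \leq -F^{11}_{11}(p) \leq |F^{11}_{11}(p)| \leq |F|(p)$ yields the claim. The main obstacle is really the algebraic bookkeeping in the middle step: one must differentiate an \emph{entry of the matrix inverse} $u^{ij}$ rather than a scalar reciprocal $1/u_{11}$, and the adapted diagonal coordinates are precisely what make the difference between these two second derivatives non-positive.
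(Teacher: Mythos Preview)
Your proof is correct and follows essentially the same strategy as the paper: a linear change of coordinates fixing the $x_1$-direction so that $(u_{ij})(p)$ becomes diagonal, a computation showing $\partial_1^2(u_{11}^{-1}) \le \partial_1^2 u^{11}$ at $p$ via the nonnegative cross terms $u_{11b}^2/u_{bb}$, and then the bound $|u^{11}_{\ \ 11}|\le |F|$. The only cosmetic differences are that the paper further rescales to make $(u_{ij})(p)$ the identity (so $|F|^2=\sum (u^{ij}_{\ \ kl})^2$ without weights) and uses a cofactor expansion rather than the matrix-inverse differentiation formula.
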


\begin{proof}
Observe that the quantity $$\left ( \frac{\partial}{\partial x_1} \right )^2 (u^{-1}_{11})(p)$$ is unchanged by rescaling
$x_1$. So by scaling $x_1$, we will get $u_{11}(p)=1$. Next we want to show that after carefully selecting $x_2,\ldots,x_n$, $u_{ij}(p)$ will be a standard Euclidean metric. We need a standard linear algebra fact.

\begin{claim}
Suppose that $(u_{ij})$ is a symmetric and positive definite matrix. Then there is an upper triangular matrix\[
A = \left(
\begin{array}{ccccc}
1 & a_{12} & a_{13} & \cdots & a_{1n} \\
0 & 1 & a_{23} & \cdots & a_{2n} \\
& & \ddots & &\\
0 & 0 & \cdots & 1 & a_{n-1 \ n}\\
0 & 0 & \cdots & 0 & 1
\end{array}
\right)
\]
such that
\[
A^{T} (u_{ij}) A = B = \left(
\begin{array}{cccc}
\lambda_1 & 0 & \cdots & 0\\
0 & \lambda_2 & \cdots & 0\\
& & \ddots & \\
0 & \cdots & 0 & \lambda_n
\end{array}
\right), 
\]
where $B$ is a diagonal matrix with $\lambda_1, \cdots, \lambda_n > 0$, more importantly $\lambda_1=u_{11}$.
\end{claim}

Set $p = (p_1,\ldots,p_n) \in P$, $A$ be the matrix in the previous lemma and $v(x)=u(p+(x-p)A^T)$. Then
the $i$-th element of $(x-p)A^T$ is $$(x_1-p_1)a_{i1}+(x_2-p_2)a_{i2}+\cdots+(x_n-p_n)a_{in}.$$
So
\begin{eqnarray*}
\frac{\partial^2 v}{\partial x_i \partial x_j} &=& \frac{\partial}{\partial x_j}\left(\frac{\partial v}{\partial x_i}\right)\\
& = & \frac{\partial}{\partial x_j} (a_{1i}u_1(p+(x-p)A) + a_{2i}u_2(p+(x-p)A) + \\
&  & \cdots + a_{ni}u_n(p+(x-p)A)) \\
& = & (a_{1i}, a_{2i}, \cdots, a_{ni}) \left(
\begin{array}{c}
\frac{\partial}{\partial x_j}(u_1(p+(x-p)A)) \\
\frac{\partial}{\partial x_j}(u_2(p+(x-p)A)) \\
\vdots\\
\frac{\partial}{\partial x_j}(u_n(p+(x-p)A)) \\
\end{array}
\right)\\
& = & (a_{1i}, a_{2i}, \cdots, a_{ni}) \left(
\begin{array}{ccc}
u_{11} & \cdots & u_{1n} \\
& \vdots\\
u_{n1} & \cdots & u_{nn}\\
\end{array}
\right)
\left(
\begin{array}{c}
a_{1j}\\
\vdots\\
a_{nj}\\
\end{array}
\right),
\end{eqnarray*}
which means
$$
\left( \frac{\partial^2 v}{\partial x_i \partial x_j} \right)(p) = A^T (u_{ij}) A.
$$
Hence $(\frac{\partial^2 v}{\partial x_i \partial x_j})(p)$ is a diagonal matrix with $v_{11}(p)=u_{11}(p)=1$,
more importantly, $v$ restricts on the line $\{ p + t(1,0,\ldots,0) | t \in \mathbb{R} \}$ is the same as $u$ since
$$
v(p+(t,0,\ldots,0))=u(p+(t,0,\ldots,0)A^T)=u(p+(t,0,\ldots,0)).
$$
It is easy to see after rescaling $x_2,\ldots,x_n$, that $(v_{ij})$ can be the identity matrix.

At this point, after changing the coordinate system the left hand side of inequality (\ref{Curvature inequality}) remains unchanged. We also want to show that the right hand side doesn't change either. Let us use indices $\alpha, \beta$ representing the new coordinate system and $i, j$ representing the old coordinate system. Also we set $A^{-1}=(b_{ij})$. From the above calculation, we get $$
v^{\alpha \beta} (p) = b_{\alpha i} \ u^{ij}(p) \ b_{\beta j}.
$$

Hence at point $p$, we have
$$
v^{\alpha \beta}_{\quad \gamma \delta}=a_{k \gamma} b_{\alpha i} u^{ij}_{\quad kl} b_{\beta j} a_{l \delta}
$$

and
$$
v^{\gamma \delta}_{\quad \alpha \beta}=a_{\bar{i} \alpha} b_{\gamma \bar{k}} u^{\bar{k} \bar{l}}_{\quad \bar{i} \bar{j}} b_{\delta \bar{l}} a_{\bar{j} \beta}.
$$

Then

\begin{eqnarray*}
& & v^{\alpha \beta}_{\quad \gamma \delta} v^{\gamma \delta}_{\quad \alpha \beta} \\
&=& a_{k \gamma} b_{\alpha i} u^{ij}_{\quad kl} b_{\beta j} a_{l \delta} a_{\bar{i} \alpha} b_{\gamma \bar{k}} u^{\bar{k} \bar{l}}_{\quad \bar{i} \bar{j}} b_{\delta \bar{l}} a_{\bar{j} \beta}\\
&=& \delta^k_{\bar{k}} \delta^i_{\bar{i}} \delta^j_{\bar{j}} \delta^l_{\bar{l}} u^{ij}_{\quad kl} u^{\bar{k} \bar{l}}_{\quad \bar{i} \bar{j}}\\
&=& u^{ij}_{\quad kl} u^{kl}_{\quad ij} \\
&=& |F|^2.
\end{eqnarray*}

Hence $$|F|^2 = \sum_{i,j,k,l} \left ( {v^{ij}}_{kl} \right )^2, $$ and
${v^{11}}_{11} \leq |F|.$ Now we have $$v^{11}=\frac{V_{11}}{\det(v_{ij})},$$ where $(V_{ij})$ is the cofactor matrix of $(v_{ij})$. So
$$v^{11}-v^{-1}_{11} = \frac{v_{11}V_{11}-\det(v_{ij})}{v_{11}\det(v_{ij})}.$$ Since $v_{ij}, i \neq j$ vanishes at the
point $p$ we have $$\left ( \frac{\partial}{\partial x_1}
\right)^2(v^{11}-v^{-1}_{11})=2\sum_{k=2}^n\frac{(v_{1k1})^2}{v_{11}\det(v_{ij})}=2\sum_{k=2}^n(v_{1k1})^2 \geq 0$$ at
$p$. So
$$\left (\frac{\partial}{\partial x_1} \right )^2 v^{-1}_{11} \leq v^{11}_{11} \leq |F|.$$ Hence
$$\left (\frac{\partial}{\partial x_1} \right )^2 u^{-1}_{11} \leq u^{11}_{11} \leq |F|.$$
\end{proof}

\begin{lemma}
\label{Upper bound of Hessian u} Let $p$ be a point of $P$ and $\nu=(\nu^i)$ a unit vector. Suppose that the segment
${p+t\nu:-3R \leq t \leq 3R}$ lies in $|P|$, that $|F| \leq 1$ in $P$ and that $u$ satisfies the $M$-condition. Then
$$u_{ij}\nu^i\nu^j \leq \max \left ( \frac{2M}{\pi R}, 2\left ( \frac{M}{\pi} \right )^2 \right ).$$
\end{lemma}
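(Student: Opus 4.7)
The plan is to reduce the inequality to a one-dimensional ODE estimate along the given segment. First I would rotate the coordinates of $\mathbb{R}^{n}$ so that $\nu=e_{1}$ (the curvature bound $|F|\le1$, the $M$-condition, and the length of the segment are all invariant), and set $h(t):=u_{11}(p+te_{1})$ on $[-3R,3R]$, so that $u_{ij}\nu^{i}\nu^{j}(p)=h(0)$. Two one-variable inequalities then follow from the hypotheses: the previous lemma, applied at each point of the segment, gives $(1/h)''(t)\le|F|(p+te_{1})\le1$; and the $M$-condition applied to the pair $p\pm Re_{1}$ (whose associated segment $I$ of length $6R$ is exactly the given segment, and so lies in $\bar P$) produces
$$
\int_{-R}^{R} h(t)\,dt \;=\; \partial_{1}u(p+Re_{1}) - \partial_{1}u(p-Re_{1}) \;\le\; M.
$$
Writing $g:=1/h>0$, the task becomes: show that $g''\le 1$ on $[-3R,3R]$ and $\int_{-R}^{R}(1/g)\,dt\le M$ force $g(0)\ge\min\!\bigl(\pi R/(2M),\,\pi^{2}/(2M^{2})\bigr)$.

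Next I would integrate $g''\le 1$ twice from $0$ to get the Taylor bound $g(t)\le P(t):=m+bt+t^{2}/2$ on $[-3R,3R]$, with $m=g(0)$ and $b=g'(0)$; since $g>0$, also $P>0$ on this interval. The unknown slope $b$ is then eliminated by a symmetrisation trick: for every $t\in[-R,R]$, both $P(t)$ and $P(-t)$ are positive, and the AM-HM inequality gives
$$
\frac{1}{P(t)}+\frac{1}{P(-t)} \;\ge\; \frac{4}{P(t)+P(-t)} \;=\; \frac{2}{m+t^{2}/2}.
$$
Integrating over $[-R,R]$ and using $\int_{-R}^{R}(1/P(-t))\,dt=\int_{-R}^{R}(1/P(t))\,dt$, the parameter $b$ drops out and one obtains
$$
M \;\ge\; \int_{-R}^{R}\frac{dt}{g(t)} \;\ge\; \int_{-R}^{R}\frac{dt}{P(t)} \;\ge\; \int_{-R}^{R}\frac{dt}{m+t^{2}/2} \;=\; \frac{2\sqrt{2}}{\sqrt{m}}\,\arctan\!\Bigl(\frac{R}{\sqrt{2m}}\Bigr).
$$

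To finish, I would perform a dichotomy on $R/\sqrt{2m}$. If $R\ge\sqrt{2m}$, then $\arctan(R/\sqrt{2m})\ge\pi/4$, so $M\ge\pi/\sqrt{2m}$ and therefore $h(0)=1/m\le 2(M/\pi)^{2}$. Otherwise $R/\sqrt{2m}\in[0,1]$, and by concavity of $\arctan$ on $[0,\infty)$ together with $\arctan(0)=0$ and $\arctan(1)=\pi/4$ one has $\arctan(x)\ge(\pi/4)x$ on $[0,1]$; this gives $M\ge\pi R/(2m)$ and hence $h(0)\le 2M/(\pi R)$. Either way $h(0)\le\max\bigl(2M/(\pi R),\,2(M/\pi)^{2}\bigr)$, which is the desired bound.

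The main conceptual difficulty is that the derivative $b=g'(0)$ is not controlled by the hypotheses. The AM-HM symmetrisation is what makes $b$ vanish from the estimate and, in particular, avoids a tedious separate treatment of the regime $b^{2}>2m$ (where the parabola $P$ has real zeros and $\int 1/P$ would involve a logarithm instead of an arctangent). Once $b$ is out of the picture, the remaining arctangent estimates are routine one-variable calculus.
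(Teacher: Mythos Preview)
Your proof is correct and follows essentially the same route as the paper's: reduce to the $x_1$-axis, set $H=h=u_{11}$ along the segment, use the previous lemma to bound $(1/H)''\le 1$, use the $M$-condition to bound $\int_{-R}^{R}H\le M$, Taylor-expand $1/H$ to second order, symmetrise in $t\mapsto -t$ to kill the linear coefficient (the paper does the same AM--HM step without naming it), and finish with the dichotomy $\tfrac{4}{\pi}\arctan z\ge\min(1,z)$. The only cosmetic difference is that you phrase the reduction as a rotation and name the AM--HM inequality explicitly.
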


\begin{proof}
We can suppose that $\nu$ is the unit vector in the $x_1$ direction and that $p$ is the origin. Let $H(t)=u_{11}(t,0).$
We apply the definition of the $M$-condition to obtain $$\int^R_{-R} H(t) dt \leq M.$$ By the previous Lemma,
$$\frac{d^2}{dt^2}H(t)^{-1} \leq 1.$$ Suppose $H(0)^{-1}=\epsilon.$ Then $$H(t)^{-1} \leq \epsilon + C t +
\frac{t^2}{2}, $$ where $C=H'(0)$. Thus $$H(t)+H(-t) \geq \frac{1}{\epsilon+Ct+t^2/2} + \frac{1}{\epsilon-Ct+t^2/2}
\geq \frac{2}{\epsilon + t^2/2}.$$ This gives $$\int_{-R}^R H(t) \geq \int^R_{-R} \frac{dt}{\epsilon + t^2/2} =
2\epsilon^{-1/2} \int_0^{R\epsilon^{-1/2}} \frac{dt}{1+t^2/2}.$$ So we have $$M \geq
\frac{2\sqrt{2}}{\sqrt{\epsilon}}\tan^{-1}\left( \frac{R}{\sqrt{2\epsilon}} \right).$$ Now use the fact that
$$\frac{4}{\pi}\tan^{-1}(z) \geq \min(1,z)$$ and a little manipulation to obtain the stated bounds on
$\epsilon^{-1}=u_{11}(0,0).$
\end{proof}

The results in the rest of this subsection depend upon a special feature of the Riemannian metric $g$. We need a simple comparison result for Jacobi fields.

\begin{lemma}
\label{Eigenvalue decreasing} Suppose $\gamma(t) \in P$ is a geodesic parameterized by its arc length $t \in (0,a)$. Then for any vector $\nu_1 \in \mathbb{R}^n$, $$\frac{\sqrt{(u^{ij})(\nu_1, \nu_1)}}{\sinh t}$$ is a decreasing function of t.
\end{lemma}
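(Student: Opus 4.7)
The plan is to interpret $\sqrt{u^{ij}(\nu_1,\nu_1)}$ along the geodesic as the length of a Jacobi field on the toric K\"ahler manifold and then run a standard ODE comparison against the model equation $y'' = y$.

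Set $f(t) := \sqrt{u^{ij}(\gamma(t))\,\nu_1^i \nu_1^j}$; under the identification of $P_0 \times \mathbb{T}^n$ with the open part of the ambient K\"ahler manifold, this is the pointwise norm $|X_{\nu_1}|(\gamma(t))$ of the Killing vector field $X_{\nu_1} := \nu_1^i \partial_{\eta_i}$ generated by the torus action. The real slice $\{\eta = 0\}$ is the fixed locus of the isometric involution $(x,\eta)\mapsto(x,-\eta)$ --- the metric $u_{ij}dx^i dx^j+u^{ij}d\eta^i d\eta^j$ being quadratic in $d\eta$ --- hence totally geodesic. So $\gamma$ is a geodesic in the ambient K\"ahler manifold as well, and since the flow of $X_{\nu_1}$ acts by isometries and carries $\gamma$ through a family of geodesics, $X_{\nu_1}|_\gamma$ is a Jacobi field, perpendicular to $\gamma'$ because $X_{\nu_1}$ lies in the torus fibre while $\gamma'$ is tangent to the slice.

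The heart of the proof is the pointwise second-order inequality $f''(t) \leq f(t)$. Writing $g = f^2$, the Jacobi equation gives $g'' = 2|\nabla_{\gamma'}X_{\nu_1}|^2 - 2R(X_{\nu_1},\gamma',X_{\nu_1},\gamma')$, and $f'' \leq f$ is equivalent to $2gg'' - (g')^2 \leq 4g^2$. To verify this at a point $p \in \gamma$, I would adopt coordinates with $u_{ij}(p) = \delta_{ij}$, as in the proof of \eqref{Curvature inequality}; at $p$ the Christoffel symbols reduce to $\Gamma^c_{ab}(p) = \tfrac12 u_{abc}$ thanks to the total symmetry of $u_{ij,k} = \partial_i\partial_j\partial_k u$, and one has the identity
\[
u^{ij}_{,ab}\big|_p \;=\; u_{ik,a}u_{jk,b} + u_{ik,b}u_{jk,a} - u_{ij,ab}.
\]
Unpacking these, $|\nabla_{\gamma'}X_{\nu_1}|^2$ and the sectional-curvature term expand into explicit quadratic expressions in the third and fourth derivatives of $u$; the pointwise curvature estimate \eqref{Curvature inequality}, together with the standing hypothesis $|F|\le 1$ inherited from Lemma~\ref{Upper bound of Hessian u}, should then suffice to bound the combination $2gg'' - (g')^2 - 4g^2$ by zero. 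The scalar case $n=1$ is a useful sanity check: with $a=u''$ one has $f=|\nu_1|/\sqrt a$, and $f''\le f$ reduces directly to $(1/a)''\le 2$, which is immediate from $|F| = |(1/a)''|\le 1$.

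Once $f''\le f$ is in hand, the monotonicity is a standard ODE comparison. Set $w(t):=f(t)\cosh t - f'(t)\sinh t$, so that $w'(t) = (f(t)-f''(t))\sinh t \ge 0$ on $(0,a)$; thus $w$ is non-decreasing, and since $w(0^+) = f(0^+) \ge 0$, one has $w\ge 0$ throughout. This gives
\[
\left(\frac{f(t)}{\sinh t}\right)' \;=\; \frac{f'(t)\sinh t - f(t)\cosh t}{\sinh^2 t} \;=\; -\frac{w(t)}{\sinh^2 t} \;\le\; 0,
\]
as claimed. The principal obstacle is thus the pointwise inequality $f''\le f$: the algebraic bookkeeping needed to arrange the cubic and quartic expressions in derivatives of $u$ into a combination controlled by \eqref{Curvature inequality} is the delicate part of the argument, while the Jacobi-field setup and ODE comparison surrounding it are routine.
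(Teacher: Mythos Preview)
Your Jacobi-field setup and the final ODE comparison are both fine and match the paper's framework; the gap is in the middle, where you assert the scalar inequality $f''\le f$ and leave its verification as ``algebraic bookkeeping''. In fact this inequality is \emph{false} in general for $n\ge 2$, so no amount of bookkeeping will close it.

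To see why, fix $t_0$ and, as in the paper, choose a parallel orthonormal frame $e_1,\dots,e_n$ in the fibre direction with $e_i(t_0)=\nu_i(t_0)$, write $\nu_i=G^j_i e_j$, and set $S=G^{-1}G'$. Then $G(t_0)=I$, $G'(t_0)=S(t_0)$, and at $t_0$ one computes (with $f(t_0)=1$)
\[
f'(t_0)=S_{11},\qquad f''(t_0)=\sum_{j\ge 2}S_{1j}^{2}-H_{11},
\]
where $H_{ij}=R(e_i,\gamma',\gamma',e_j)$. The curvature bound $|F|\le 1$ controls only $-H_{11}\le 1$; it says nothing about the off-diagonal entries $S_{1j}$, which can be arbitrarily large (they encode how the Killing field $\nu_1$ twists relative to the parallel frame). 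Thus $f''(t_0)$ can exceed $1$. Your one-dimensional sanity check works precisely because there are no off-diagonal entries when $n=1$.

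What the paper does instead is avoid $f''$ altogether and bound $f'/f$ directly. Using the full $n$-dimensional family of Killing--Jacobi fields $\nu_1,\dots,\nu_n$ (which stay linearly independent because $u^{ij}$ is positive definite), one gets the \emph{matrix} Riccati equation $S'+S^2=-H$. Each eigenvalue $\lambda(t)$ of the symmetric matrix $S$ then satisfies the scalar inequality $\lambda'+\lambda^2\le 1$, and a comparison with $\mu(t)=\coth(t-\tau)$ forces $\lambda(t_0)\le\coth t_0$. Since $f'/f=S_{11}$ is bounded by the top eigenvalue, this gives $f'/f\le\coth t$, i.e.\ $(f/\sinh t)'\le 0$, without ever claiming $f''\le f$. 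The essential idea you are missing is that one must track the whole matrix $S$, not a single Jacobi field; the off-diagonal terms that spoil $f''\le f$ are exactly what the matrix Riccati equation absorbs.
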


\begin{proof}
Notice that the Riemannian metric on the whole manifold is invariant under the $\mathbb{T}^n$ action, hence we can let $\nu_1$ be the Jacobi vector field along $\gamma$. Let us fix a point $t_0 \in (0,a)$; without loss of generality, we can assume $\nu_1$'s Riemannian magnitude is 1 at $\gamma(t_0)$, i.e. $|\nu_1|^2=(u^{ij})(\nu_1,\nu_1)=1$. Let us consider the derivative of $|\nu_1| / \sinh t$ at $t=t_0$. We only need to show that $\langle \nu_1', \nu_1 \rangle \leq \cosh t_0/ \sinh t_0$, where $\nu_1'=\nabla_{X(t_0)} \nu_1, X=\frac{\partial \gamma}{\partial t}$.

Let us pick $\nu_2, \ldots, \nu_n \in \mathbb{R}^n$, such that $|\nu_2|=\cdots=|\nu_n|=1$ and $0=\langle \nu_i, \nu_j \rangle =(u^{ij})(\nu_1,\nu_2), i \neq j$ at $\gamma(t_0)$. Assume $e_1, \dots, e_n$ are orthogonal frame along $\gamma$ such that $e_1(t_0)=\nu_1(t_0), e_1'=0, \ldots, e_n(t_0)=\nu_n(t_0), e_n'=0$. Expressing $\nu_1, \ldots,  \nu_n$ in terms of $e_1, \ldots, e_n$, we get $$\nu_i=\nu_i(t)=G^j_i(t) e_j(t). $$

The Jacobi equation tells us that
$$
\nu_i'' + Rm(\nu_i, X)X=0
$$
Let $H$ be a $n \times n$ symmetric matrix such that $H(e_i, e_j)=Rm(e_i,X,X,e_j)$, then
$$
G''+ GH=0.
$$

Let $S = G^{-1}G'$, so that $S$ satisfies the Riccati equation

$$
S'+S^2=-H.
$$

Notice that

$$
\nu_i \langle \nu_j, X \rangle = 0 = \nu_j \langle \nu_i, X \rangle;
$$
we get

\begin{eqnarray*}
\langle \nu_j, \nabla_{\nu_i}X \rangle &=& \langle \nu_i, \nabla_{\nu_j} X \rangle \\
\langle \nu_j, \nu_i' \rangle &=& \langle \nu_i, \nu_j' \rangle \\
G_j^k {G'}_i^k &=& G_i^k {G'}_j^k.
\end{eqnarray*}
Hence $S(t)$ is a symmetric matrix for all $t$. At $t=t_0$ we have $\langle \nu_1', \nu_1 \rangle = S_{11}$, the $(1,1)$ entry of the matrix $S$, so it suffices to prove that all the eigenvalues of $S(t_0)$ are bounded above by $\cosh t_0 / \sinh t_0$. Now each eigenvalue $\lambda(t)$ of $S(t)$ satisfies a scalar Riccati differential inequality

$$
\lambda'+\lambda^2 \leq 1.
$$

By standard arguments, we may ignore the complications that might occur from multiple eigenvalues. Suppose by contradiction that $\lambda(t_0) > \cosh t_0/ \sinh t_0$. Then we can find $\tau \in (0, t_0)$ such that $\lambda(t_0) = \cosh(t_0 - \tau)/ \sinh(t_0 - \tau)$. Now the function $\mu(t) = \cosh(t - \tau)/ \sinh(t - \tau)$ satisfies the equation $\mu' + \mu^2 = 1$. So $\lambda'- \lambda^2 \leq \mu' - \mu^2$ in the interval $(\tau, t_0]$ and $\lambda(t_0) = \mu(t_0)$. It follows that $\lambda(t) \geq \mu(t)$ for $t \in (\tau, t_0)$ and since $\mu(t) \rightarrow \infty$ as $t$ tends to $\tau$ from above we obtain a contradiction.

\end{proof}

\begin{lemma}
Let $E$ be a face of the polytope $P$ and suppose that the defining function $\lambda_E$(determined by $\sigma$) is
$x_1$. Then if $u$ satisfies the Guillemin boundary conditions and $|F| \leq 1$ throughout $P$ we have $$u^{11}(p) \leq
\sinh^2 {\rm Dist}_g(p,E)$$ for any $p$ in $P$.
\end{lemma}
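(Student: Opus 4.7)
The plan is to apply Lemma \ref{Eigenvalue decreasing} with the constant coordinate vector $\nu_1 = e_1 = (1, 0, \ldots, 0)$, so that $(u^{ij})(\nu_1, \nu_1) = u^{11}$, along a length-minimizing curve running from $p$ to the face $E$.

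First I would set $d = \text{Dist}_g(p, E)$, which is finite by the corollary to Lemma \ref{Riemannian distance controlled by Euclidean distance}. A standard compactness argument in the compact Riemannian manifold $(X, g)$, combined with the $\mathbb{T}^n$-invariance of the metric, gives a minimizing geodesic $\gamma : [0, d] \to \bar P$ parametrized by arc length with $\gamma(d) = p$, $\gamma(0) \in E$, and $\gamma|_{(0, d]}$ lying in the open polytope $P_0$ (a minimizing geodesic cannot graze a second face first). Applying Lemma \ref{Eigenvalue decreasing} to $\gamma|_{(0, d]}$ yields the decreasing property of $t \mapsto \sqrt{u^{11}(\gamma(t))}/\sinh t$, and in particular
$$
\frac{\sqrt{u^{11}(p)}}{\sinh d} \; \leq \; \liminf_{t \to 0^+} \frac{\sqrt{u^{11}(\gamma(t))}}{\sinh t}.
$$

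The key step is to show this limit is at most $1$. The Guillemin boundary condition gives $u(x) = \frac{1}{2} x_1 \log x_1 + \phi(x)$ near $E$ with $\phi \in C^\infty(\bar P)$; a Schur-complement expansion of $(u_{ij})^{-1}$ then yields the boundary asymptotic $u^{11}(x) = 2 x_1 + O(x_1^2)$ as $x_1 \to 0$. Using the arc-length identity $u_{ij}\dot\gamma^i\dot\gamma^j \equiv 1$ together with positive-definiteness of the Schur complement of $(u_{ij})$ with respect to its $(1,1)$ block, I obtain
$$
u_{11}\Bigl(\dot\gamma^1 + u_{11}^{-1} \sum_{j \geq 2} u_{1j}\dot\gamma^j\Bigr)^{\!2} \leq 1,
$$
which forces $|\dot\gamma^1(t)| \leq \sqrt{2 x_1(\gamma(t))}\,(1 + o(1))$ as $t \to 0^+$. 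Integrating $\frac{d}{dt}\sqrt{x_1(\gamma(t))} \leq \frac{1}{\sqrt 2} + o(1)$ from $t = 0$ (where $x_1 = 0$) gives $x_1(\gamma(t)) \leq \frac{t^2}{2}(1 + o(1))$, hence $u^{11}(\gamma(t)) \leq t^2\,(1 + o(1))$. Dividing by $\sinh^2 t \sim t^2$ and letting $t \to 0^+$ produces the claimed $\liminf \leq 1$, yielding $u^{11}(p) \leq \sinh^2 d = \sinh^2 \text{Dist}_g(p, E)$.

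The principal obstacle is the boundary-limit step: one must convert the $\log$-singularity of the Guillemin boundary condition into a quantitative bound relating arc length to the defining function $x_1$ along an arbitrary geodesic approaching $E$. A subsidiary technical point is the claim that the minimizing geodesic meets $\partial P$ only at its endpoint in $E$; this follows from a standard first-variation argument, since touching another face $E'$ in the middle would allow local shortening away from $E'$.
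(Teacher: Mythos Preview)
Your argument is correct and structurally identical to the paper's: pick a unit-speed geodesic from $E$ to $p$, apply Lemma~\ref{Eigenvalue decreasing} with $\nu_1=e_1$ so that $\sqrt{u^{11}}/\sinh t$ is non-increasing, and then show the limit at $t=0^+$ is at most $1$. The only genuine difference is in how that boundary limit is obtained. The paper works on the total space $X$: since $\lambda_E=x_1$ is the \emph{normalized} defining function, the Killing field $\partial/\partial\theta_1$ generates a circle action of period $2\pi$ and vanishes along $\mu^{-1}(E)$; standard fixed-point geometry for isometric $S^1$-actions then gives $|\partial/\partial\theta_1|\le t+o(t)$ along any geodesic leaving $\mu^{-1}(E)$, and since $u^{11}=|\partial/\partial\theta_1|^2$ the limit follows in one line. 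You instead stay on the polytope and unpack the Guillemin form $u=\tfrac12 x_1\log x_1+\phi$ to get $u^{11}=2x_1+O(x_1^2)$ via a Schur complement, then run an ODE estimate on $x_1(\gamma(t))$. Your route is more self-contained within the symplectic picture and makes the role of the normalization constant $\tfrac12$ in the Guillemin formula completely explicit; the paper's route is quicker but relies on the reader supplying the ``$|\text{Killing field}|\le t$ near its zero set'' fact. Incidentally, your concern about the minimizing geodesic grazing another face $E'\neq E$ is harmless: the monotonicity statement of Lemma~\ref{Eigenvalue decreasing} is really about the globally smooth Killing field $\partial/\partial\theta_1$ on $X$, which does not vanish on $\mu^{-1}(E')$, so the Jacobi-field comparison continues across such points without modification.
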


\begin{proof}
To see this we consider a geodesic parameterized by $t \geq 0$, starting at time $0$ on the boundary component $E$.
Near the boundary we can describe the geometry in terms of a $2n$-manifold with a group action in the familiar way. The
vector field $\frac{\partial}{\partial \theta_1}$ is smooth in the $2n$-manifold and vanishes at $t=0$. The condition
that $x_1$ is the normalized defining function just asserts that this vector field is the generator of a circle action
of period $2\pi$. It follows that $$\lim_{t \to 0} t^{-1} |\frac{\partial}{\partial \theta_1}| \leq 1,$$ (with equality
when the geodesic is orthogonal to the edge $E$). Then, by the above lemma, $\sqrt{u^{11}} = |\frac{\partial}{\partial
\theta_1}| \leq \sinh t$ and the result follows.
\end{proof}

\begin{cor}
\label{Corollary of distance control} Let $E$ be a face of $P$ with defining function $\lambda_E$. Then if $|F| \leq
1$ we have $$\lambda_E(p) \leq \cosh ({\rm Dist}_g(p,E))-1.$$
\end{cor}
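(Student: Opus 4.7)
The plan is to reduce to the setting of the preceding lemma (where $\lambda_E = x_1$) and then integrate its pointwise bound along a minimizing geodesic from $p$ to the face $E$. A change of basis in the lattice $\mathbb{Z}^n$ preserves $g$, $|F|$, and the Riemannian distance, so without loss of generality I may take $\lambda_E(x) = x_1$, after which the preceding lemma gives $u^{11}(q) \leq \sinh^2 {\rm Dist}_g(q, E)$ for all $q \in P$.

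Fix $p \in P$, let $T = {\rm Dist}_g(p, E)$, and choose a unit-speed minimizing geodesic $\gamma : [0, T] \to X$ that starts at time $0$ on the divisor over $E$ and ends at a lift of $p$ at time $T$. In the symplectic-coordinate expression of $g$, the inverse metric on the $x$-directions is $(u^{ij})$, so $|\nabla x_1|_g^2 = u^{11}$ along $\gamma$. Combining the preceding lemma applied at the base point of $\gamma(t)$ with the obvious bound ${\rm Dist}_g(\gamma(t), E) \leq t$ (since $\gamma|_{[0,t]}$ joins $\gamma(t)$ to $E$ and has length $t$), I obtain the pointwise estimate
\begin{equation*}
|\nabla x_1|_g(\gamma(t)) \leq \sinh t.
\end{equation*}

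Cauchy--Schwarz together with $|\dot\gamma|_g = 1$ then yields $\frac{d}{dt} x_1(\gamma(t)) \leq \sinh t$, and since $x_1 = \lambda_E$ vanishes on $E$, integrating over $[0, T]$ gives
\begin{equation*}
\lambda_E(p) = \int_0^T \frac{d}{dt} x_1(\gamma(t))\, dt \leq \int_0^T \sinh t\, dt = \cosh T - 1,
\end{equation*}
which is the asserted inequality.

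The only delicate point I expect is the existence of a minimizing geodesic from $p$ that genuinely terminates on the boundary divisor, where the toric metric degenerates in moment-map coordinates. This can be handled exactly as in the proof of the preceding lemma, either by running a geodesic outward from a smooth point of the divisor using the local $2n$-manifold description of $X$, or by approximating $p$ from interior minimizers and passing to the limit in the integrated inequality above.
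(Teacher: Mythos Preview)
Your proof is correct and follows essentially the same route as the paper: reduce by affine invariance to $\lambda_E = x_1$, take a unit-speed geodesic from $E$ to $p$, bound $|dx_1/dt| \le |dx_1|_g = \sqrt{u^{11}} \le \sinh t$ using the preceding lemma, and integrate. The paper's version is terser (it invokes the bound $\sqrt{u^{11}} \le \sinh t$ along the geodesic directly from the lemma's proof rather than via the pointwise statement plus ${\rm Dist}_g(\gamma(t),E)\le t$), but the argument is the same, and your closing remark about the existence of the minimizing geodesic addresses a point the paper leaves implicit.
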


\begin{proof}
Notice that this is an affine-invariant statement. There is no loss in supposing that, as above, $\lambda_E=x_1$. Then
for a geodesic starting from a point of $E$, parameterized by arc length, we have $$|\frac{dx_1}{dt}| \leq |dx_1|_g =
\sqrt{u^{11}} \leq \sinh t$$ hence $x_1 \leq \cosh t-1.$
\end{proof}

\begin{lemma}
\label{Control of Hessian of u}
Suppose that $|F| \leq 1$ and that $p$ is a point in $P$ with ${\rm Dist}_g(p,\partial P) \geq \alpha >0.$ Then if $q$
is a point with ${\rm Dist}_g(p,q)=d$ we have $$(u^{ij}(q)) \leq \frac{\sinh^2(\alpha+d)}{\sinh^2 \alpha}
(u^{ij}(p)).$$ If $d<\alpha$ we have $$(u^{ij}(q)) \geq \frac{\sinh^2(\alpha-d)}{\sinh^2 \alpha}(u^{ij}(p)).$$
\end{lemma}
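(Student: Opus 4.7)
The plan is to reduce everything to Lemma~\ref{Eigenvalue decreasing}, which already asserts that the quantity $\sqrt{(u^{ij})(\nu_1,\nu_1)}/\sinh t$ is monotonically decreasing along any geodesic (once $|F|\le 1$). The only real work here is to select the right geodesic and its arc-length parametrization so that the two stated bounds drop out directly.

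For the upper bound I would first take a minimizing geodesic $\gamma$ from $p$ to $q$ (which exists by Hopf--Rinow on the compact toric variety) of length $d$, and then extend it backwards from $p$ by arc length $\alpha$. The hypothesis ${\rm Dist}_g(p,\partial P)\ge\alpha$ guarantees that the open ball $B_g(p,\alpha)$ lies in the smooth region, so this backwards extension is a genuine geodesic inside $P^\circ$. Shifting the arc-length parameter so that the extended initial endpoint corresponds to $t=0$, I have $\gamma(\alpha)=p$ and $\gamma(\alpha+d)=q$. Lemma~\ref{Eigenvalue decreasing} applied at $t=\alpha$ and $t=\alpha+d$ then gives, for every $\nu_1\in\mathbb{R}^n$,
$$
(u^{ij})(\nu_1,\nu_1)(q)\;\le\;\frac{\sinh^2(\alpha+d)}{\sinh^2\alpha}\,(u^{ij})(\nu_1,\nu_1)(p),
$$
which, interpreted as an inequality of symmetric positive-definite matrices, is exactly the first stated bound.

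For the lower bound (in the regime $d<\alpha$) I would simply apply the upper bound with the roles of $p$ and $q$ interchanged. The triangle inequality yields ${\rm Dist}_g(q,\partial P)\ge\alpha-d>0$, so substituting $\alpha\rightsquigarrow\alpha-d$ and swapping $p$ with $q$ in the already-established bound gives
$$
(u^{ij}(p))\;\le\;\frac{\sinh^2\alpha}{\sinh^2(\alpha-d)}\,(u^{ij}(q)),
$$
which rearranges to the desired lower bound on $(u^{ij}(q))$.

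The only point deserving some attention is the backwards extension of $\gamma$: strictly speaking the hypothesis only asserts ${\rm Dist}_g(p,\partial P)\ge\alpha$, so the endpoint at $t=0$ could a priori lie on $\partial P$ rather than strictly inside. I would handle this by running the entire argument with an arbitrary $\alpha'<\alpha$ (for which the backwards extension is unambiguously interior) and then taking $\alpha'\nearrow\alpha$; both sides of the resulting inequality pass to the limit by continuity of $\sinh$. Beyond this minor technicality the lemma is essentially formal given Lemma~\ref{Eigenvalue decreasing}, so the main obstacle was really the Jacobi-field/Riccati argument in the previous lemma rather than in this one.
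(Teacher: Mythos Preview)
Your argument is correct and, for the upper bound, identical to the paper's: take a minimizing geodesic from $p$ to $q$, extend it backwards by arc length $\alpha$, reparametrize so the new initial point is at $t=0$, and apply Lemma~\ref{Eigenvalue decreasing} at $t=\alpha$ and $t=\alpha+d$.

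For the lower bound the paper argues a little differently: instead of swapping the roles of $p$ and $q$ via the triangle inequality, it extends the \emph{same} geodesic forward past $q$ up to arc length $\alpha$ (which stays in $P$ since $B_g(p,\alpha)\subset P$ and $d<\alpha$), reverses orientation, and again applies Lemma~\ref{Eigenvalue decreasing}. Your route---observing ${\rm Dist}_g(q,\partial P)\ge\alpha-d$ and invoking the already-proved upper bound with $(p,q,\alpha)$ replaced by $(q,p,\alpha-d)$---is equally valid and arguably tidier, since it recycles the first inequality rather than repeating the geodesic-extension step. The limiting trick $\alpha'\nearrow\alpha$ you add to handle the borderline case is a reasonable precaution; the paper simply writes ``$t>-\alpha$'' without further comment.
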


\begin{proof}
To prove the Lemma, observe that it suffices by affine invariance to prove the corresponding inequalities for the
matrix entry $u^{11}=|\epsilon_1|^2.$ For the first inequality we consider a minimal geodesic $\gamma$ from
$p=\gamma(0)$ to $q=\gamma(d)$ and extend it "backwards" to $t > -\alpha$. Then replacing $t$ by $t+\alpha$ we are in
the situation considered in Lemma \ref{Eigenvalue decreasing} and we obtain $$\frac{|\epsilon_1(p)|}{\sinh \alpha} \geq
\frac{|\epsilon_1(q)|}{\sinh(\alpha+d)}.$$ For the second inequality we extend the geodesic "forwards" to the interval
$[0,\alpha]$ and argue similarly.
\end{proof}

Suppose that $p=(p^1,\ldots,p^n)$ is a point of $P$ and $r > 0$. Put $$E_{p,r}=\{(x_1,\ldots,x_n) \in \mathbb{R}^n :
u_{ij}(p)(x_i-p^i)(x_j-p^j) \leq r^2 \}.$$ So $E(p,r)$ is the interior of the ellipse defined by the parameter $r$ and
the quadratic form $u_{ij}(p).$ The Euclidean area of $E(p,r)$ is $\det (u_{ij}(p))^{-1/2}\omega_n r^n$, where $\omega_n$
is the volume of a standard Euclidean $n$-ball.

\begin{lemma}
\label{Elliptic balls} Suppose that $|F| \leq 1$ and that $p$ is a point in $P$ with ${\rm Dist}_g(p, \partial P) \geq
\alpha >0.$ Then for any $\beta < \alpha$ the $\beta$-ball in $P$, with respect to the metric $g$ satisfies $$E(p,
c\beta) \subset B_g(p,\beta) \subset E(p,C\beta),$$ where $c=\sinh (\alpha-\beta)/\sinh \alpha$ and
$C=\sinh(\alpha+\beta)/\sinh \alpha$. In particular, the Euclidean area of the $\beta$ ball for the metric $g$ is
bounded below by $$\mbox{Area}_{Euc}B_g(p,\beta) \geq c^n \beta^n \omega_n \det(u_{ij})(p)^{-1/2}.$$
\end{lemma}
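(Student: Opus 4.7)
The plan is to reduce both inclusions to Lemma \ref{Control of Hessian of u}, which controls $(u^{ij}(q))$ in terms of $(u^{ij}(p))$ whenever $d_g(p,q) < \alpha$. Inverting these positive-definite matrix inequalities, for $d = d_g(p,q) < \alpha$ one obtains
\[
\frac{\sinh^2 \alpha}{\sinh^2(\alpha+d)}\, u_{ij}(p) \le u_{ij}(q) \le \frac{\sinh^2 \alpha}{\sinh^2(\alpha-d)}\, u_{ij}(p),
\]
so throughout the closed ball $\overline{B_g(p,\beta)}$ the Hessian $(u_{ij})$ is pinched between $C^{-2}u_{ij}(p)$ and $c^{-2}u_{ij}(p)$. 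Both inclusions then become comparisons between the Riemannian metric and the frozen quadratic form $u_{ij}(p)$ along suitable curves.

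For $B_g(p,\beta) \subset E(p, C\beta)$, I would pick $q \in B_g(p,\beta)$, take a minimizing unit-speed geodesic $\gamma:[0,L] \to P$ from $p$ to $q$ (so $L = d_g(p,q) \le \beta$), and bound the Euclidean displacement in the frozen norm:
\[
\|q-p\|_{u(p)} \le \int_0^L \|\dot\gamma(t)\|_{u(p)}\, dt \le \int_0^L \frac{\sinh(\alpha+t)}{\sinh\alpha}\, dt \le \frac{L \sinh(\alpha+L)}{\sinh \alpha} \le C\beta.
\]
Here the second inequality uses the matrix pinching at $\gamma(t)$ together with $|\dot\gamma|_g = 1$, and the third uses the monotonicity of $\sinh$.

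For $E(p,c\beta) \subset B_g(p,\beta)$, I would compare along the straight Euclidean segment $\sigma(s) = p + s(q-p)$, $s \in [0,1]$, which by convexity remains inside $E(p,c\beta)$. Setting
\[
T = \sup\{ t \in [0,1] : \sigma([0,t]) \subset \overline{B_g(p,\beta)}\},
\]
the pinching gives $u_{ij}(\sigma(s)) \le c^{-2} u_{ij}(p)$ on $[0,T]$, so the Riemannian length of $\sigma|_{[0,T]}$ is at most
\[
\int_0^T c^{-1}\|q-p\|_{u(p)}\,ds \le T\beta.
\]
Hence $d_g(p,\sigma(T)) \le T\beta$, which by continuity of the Riemannian distance is only consistent with the maximality of $T$ when $T=1$, yielding $q \in B_g(p,\beta)$.

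The Euclidean-area lower bound is then immediate since the Euclidean volume of the ellipsoid $E(p,c\beta)$ equals $\omega_n (c\beta)^n \det(u_{ij}(p))^{-1/2}$. The main obstacle is the continuity/maximum-principle argument in the second inclusion, where one must rule out $T < 1$ so that the straight Euclidean segment does not exit $B_g(p,\beta)$ before reaching $q$; everything else is a direct application of Lemma \ref{Control of Hessian of u} and basic estimates for $\sinh$.
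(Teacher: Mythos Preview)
Your proof is correct and takes essentially the same approach as the paper: reduce to the Hessian pinching of Lemma~\ref{Control of Hessian of u}, then compare the frozen-norm length with the $g$-length along a minimizing geodesic (for $B_g\subset E$) and along the straight Euclidean segment via a first-exit/continuity argument (for $E\subset B_g$). The only cosmetic differences are that the paper first normalizes so that $u^{ij}(p)$ is the identity instead of carrying the frozen quadratic form $\|\cdot\|_{u(p)}$, and it phrases the second inclusion as a direct contradiction at a point $q'$ on the open segment with $d_g(p,q')=\beta$, which is equivalent to your sup-$T$ formulation.
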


\begin{proof}
There is no loss in supposing that the matrix $u^{ij}(p)$ is the identity matrix, so we have to show that the ball
$B_g(p,\beta)$ defined by the metric $g$ contains a Euclidean disc of radius $c\beta$, and is contained in a Euclidean
disc of radius $C\beta$. We know by the above lemma that on the ball $B_g(p,\beta)$ we have $$c^2 \leq (u^{ij}) \leq
C^2.$$ Thus $C^{-2} \leq (u_{ij}) \leq c^{-2}$, and the Euclidean length of a path in $B_g(p,\rho)$ is at least $c^{-1}$
times the length calculated in the metric $g$, and at most $C^{-1}$ times that length. The second statement immediately
tells us that $B_g(p, \beta)$ lies in $E(p,C\beta)$. In the other direction, suppose $q$ is a point in the Euclidean disc
of radius $c\beta$ centered on $p$. We claim that $q$ lies in the (closed) $g$ ball $B_g(p,\beta)$. For if not there is
a point $q'$ in the open line segment $pq$ such that the distance from $q'$ to $p$ is $\beta$ and the line segment
$pq'$ lies in $B_g(p,\beta)$. But the Euclidean length of this line segment is strictly less than $c\beta$ so the
length in the metric $g$ is less than $\beta$, a contradiction.
\end{proof}

\section{Singularity Analysis}
By Chen-He's result \cite{ChenHe}, the Calabi flow exists for a short time. Suppose that the Calabi flow does not exist for all time and the singular time is $T$, i.e, the Riemannian curvature blows up at time $T$. We will use blowing up arguments to rule out different kinds of singularities under the Calabi flow. 

Since $\frac{\partial u_i}{\partial t}=-A_i$ and $|\nabla A|$ is bounded for all $t < T$, we conclude that for any $ t < T$, $u(t)$ satisfies a $M$-condition by Lemma \ref{Init}. Next we want to show that the scalar curvature $A$ is bounded for any finite time $T$. It is well known that the Calabi energy, $L^2$-norm of $A - \underline{A}$,  is decreasing under the Calabi flow. We want to see the corresponding formula in toric case. In fact, by the Abreu's formula, we have $$A=-\sum_{ij}\frac{\partial^2 u^{ij}}{\partial x_i \partial x_j},$$
hence $$\frac{d A^2}{d t}=- 2 A E^{ij}_{\ ij},$$
where $$E^{ij} = \frac{\partial u^{ij}}{\partial t}.$$ So
\begin{eqnarray*}
\frac{d}{dt}\int_P A^2 d \mu & = & - 2\int_P (A - \underline{A}) E^{ij}_{ij} d\mu \\
& = & -2\int_{\partial P} (A - \underline{A}) E^{ij}_i \nu_j d \sigma + 2 \int_P A_j E^{ij}_i d \mu \\
& = & 2 \int_P A_j E^{ij}_i d \mu\\
& = & 2 \int_{\partial P} A_j E^{ij} \nu_i d\sigma - 2 \int_P A_{ij}E^{ij} d\mu \\
& = & -2 \int_P A_{ij} E^{ij} d \mu \\
& = & -2 \int_P A_{ij} u^{ia} A_{ab} u^{bj} d \mu \leq 0.\\
\end{eqnarray*}

In the above calculations, the integral domains in fact are $P_{\delta}$ and $\partial P_{\delta}$ where $P_{\delta}$ is an interior domain whose boundary has a distance $\delta$ from the boundary of $P$ and we let $\delta \rightarrow 0$. The reason why all the boundary integrals go to 0 relies on the Guillemin boundary condition: without loss of generality, we can set $\nu =\langle 1,0,\ldots,0 \rangle$, so $u=x_1 \log x_1 + f$ where $f$ is a smooth function up to the boundary. Then $u^{1j}$ are all products of $x_1$ with smooth functions. So $E^{1i}=0, E^{1j}_j=0$ for all $i$ and $j>1$. Since the boundary measure $d\sigma$ is fixed, $u^{11}_1$ is fixed, hence $E^{11}_1=0$.\\

If the scalar curvature is not bounded, then there is $t < T$ and $x_0 \in P$, such that $|A(x_0,t)|>C_1$. By the assumption that $|\nabla A|$ is uniformly bounded at $[0,t]$, there is a neighborhood $x_0 \in Q \subset P$ with $Vol(Q) > C_2$ and $|A(x,t)|> C_1/2$ for all $x \in Q$. Then $\int_P A^2 > C_1^2 C_2/4$ at time t. It is easy to see that we can get $C_1$ as large as we want with $C_2$ fixed which contradicts the fact that the Calabi energy is decreasing along the Calabi flow. \\

Since the Calabi flow cannot extend through time $T > 0$, the L$^{\infty}$-norm of Riemannian curvature of $t$-slice would blow up as $t \rightarrow T$. Now pick a sequence of points $(p_i,t_i) \rightarrow (p,T)$ where $|Rm(p_i,t_i)|$ realizes the L$^{\infty}$ Riemannian curvature norm at $t_i$. We want to show the rescaling process in the corresponding polytope. Here we follow Donaldson's work \cite{D3}. Suppose $u$ is a convex function on a polytope $P$ with $u^{ij}_{ij}=-A$. Let $\lambda$ be a positive real number. Define a function $\tilde{u}$ on the polytope $\tilde{P}=\lambda P$ by $$\tilde{u}(x_1,\ldots,x_n)=\lambda u(\lambda^{-1}x_1,\ldots,\lambda^{-1}x_n).$$

\begin{prop}[Donaldson] $\tilde{u}$ satisfies the following properties:

\begin{itemize}
\item The curvature $\tilde{F}$ of $\tilde{u}$ satisfies $$|\tilde{F}|(\lambda p)=\lambda^{-1} |F|(p).$$
\item The scalar curvature $\tilde{A}=-{\tilde{u}^{ij}}_{ij}$ is $$\tilde{A}(\lambda p)=\lambda^{-1} A(p).$$
\item If $u$ satisfies an $M$-condition then so does $\tilde{u}$ (with the same value of $M$).
\end{itemize}
\end{prop}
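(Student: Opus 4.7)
The plan is to verify each of the three bullet points by a direct computation of how derivatives of $u$ transform under the rescaling $\tilde u(x)=\lambda u(\lambda^{-1}x)$. The one observation that makes everything work is that the chain rule gives one factor of $\lambda^{-1}$ per $x$-derivative, so the overall prefactor $\lambda$ in the definition of $\tilde u$ is exactly cancelled at the level of first derivatives. Concretely, I would first record the sequence
\begin{align*}
\tilde u_i(x) &= u_i(\lambda^{-1}x), \\
\tilde u_{ij}(x) &= \lambda^{-1}\, u_{ij}(\lambda^{-1}x), \\
\tilde u^{ij}(x) &= \lambda\, u^{ij}(\lambda^{-1}x), \\
\tilde u^{ij}{}_{kl}(x) &= \lambda^{-1}\, u^{ij}{}_{kl}(\lambda^{-1}x).
\end{align*}
All subsequent assertions follow by substituting $x=\lambda p$ and reading off powers of $\lambda$.

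For the scalar curvature the formula $\tilde A(\lambda p)=-\tilde u^{ij}{}_{ij}(\lambda p)=\lambda^{-1}A(p)$ is immediate from the last identity above. For the curvature norm I would use the intrinsic expression $|F|^2=u^{ab}{}_{cd}u^{cd}{}_{ab}$ recorded earlier in the excerpt: two factors of $\tilde u^{ij}{}_{kl}$ each contribute $\lambda^{-1}$, so $|\tilde F|^2(\lambda p)=\lambda^{-2}|F|^2(p)$ and hence $|\tilde F|(\lambda p)=\lambda^{-1}|F|(p)$. Here one should note that the formula $|F|^2=u^{ab}{}_{cd}u^{cd}{}_{ab}$ is the fully contracted version in which the metric factors needed to raise and lower indices have already been absorbed, so there are no hidden $\lambda$'s from the metric that I might forget.

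For the $M$-condition, the point is that it only involves first derivatives, which are scale-invariant in the sense that $\tilde u_i(x)=u_i(\lambda^{-1}x)$. Given $\tilde p,\tilde q\in\tilde P$ with $I(\tilde p,\tilde q)\subset\tilde P$, set $p=\lambda^{-1}\tilde p$ and $q=\lambda^{-1}\tilde q$; then $p,q\in P$ with $I(p,q)\subset P$, and the unit direction $\tilde\nu$ from $\tilde p$ to $\tilde q$ agrees with the unit direction $\nu$ from $p$ to $q$. Therefore
\[
\tilde V(\tilde p,\tilde q)=\tilde u_i(\tilde q)\nu^i-\tilde u_i(\tilde p)\nu^i=u_i(q)\nu^i-u_i(p)\nu^i=V(p,q)\le M,
\]
so $\tilde u$ satisfies the $M$-condition with the same constant. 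None of the three steps is a genuine obstacle; the only point that requires a moment of care is keeping track of the $\lambda$-factors in the passage from $u_{ij}$ to $u^{ij}$ and then to its second derivatives, which is why I would write out the four scaling identities above at the start and quote them throughout.
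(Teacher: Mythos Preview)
Your proof is correct and follows exactly the approach of the paper, which simply says ``using the chain rule for partial derivatives, we obtain $\tilde u^{ij}{}_{kl}=\lambda^{-1}u^{ij}{}_{kl}$ and hence the desired conclusion.'' Your write-up is more explicit---recording the scaling of $\tilde u_i$, $\tilde u_{ij}$, $\tilde u^{ij}$, and $\tilde u^{ij}{}_{kl}$ separately and spelling out the $M$-condition argument---but the underlying computation is identical.
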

\begin{proof}
Using the chain rule for partial derivatives, we obtain ${\tilde{u}^{ij}}_{\ \ kl}=\lambda^{-1} {u^{ij}}_{kl}$ and hence the desired conclusion.
\end{proof}

Dilating by a factor $\lambda_i = |Rm(p_i,t_i)|$, we get a new sequence of data $(\tilde{P}^{(i)}, \tilde{u}^{(i)})$. It is clear that, perhaps after taking a subsequence, one of the two cases must occur.
\begin{itemize}
\item The limit of the $\tilde{P}^{(i)}$ is the whole of $\mathbb{R}^n$;
\item The limit of the $\tilde{P}^{(i)}$ is a $(\mathbb{R}^+)^m \times \mathbb{R}^{n-m}$ with $m < n$.
\end{itemize}

To rule out those singularities, the idea is to study the equation
$$
-\sum u^{ij}_{ij} = A.
$$
Donaldson \cite{D2} rewrites the above equation in the following form
$$
-U^{ij} \left( \frac{1}{\det(u_{kl})} \right)_{ij}=A,
$$
where $(U^{ij})$ is the cofactor matrix of $(u_{ij})$.

It is easy to check that if we can show that for any compact set away from the boundary, the operator $(U^{ij})$ is uniformly elliptic, then the limit equation will be
$$
-U^{ij} \left( \frac{1}{\det(u_{kl})} \right)_{ij}=0
$$
in the weak sense. Thus we can use the maximal principle to obtain a contradiction. To control the upper and lower bound of $(u_{ij})$, we  utilize Donaldson's idea \cite{D3}. 

{\it Case 1: The limiting domain is $\mathbb{R}^n$.}\\

Let us normalize $u^{(i)}$ first. By translation we can assume that each $p_i$ is the origin and $u^{(i)}$ is normalized at the origin, i.e, $u^{(i)}(0)=0, \nabla(u^{(i)})=0$. We want to show that on any compact subset $K \subset \mathbb{R}^n$ we have upper and lower bounds $$C_K^{-1} \leq \tilde{u}^{(i)}_{ij} \leq C_K.$$ The upper bound follows immediately from Lemma \ref{Upper bound of Hessian u} (since on compact sets the Euclidean distance of the boundary of $\tilde{P}^{(i)}$ tends to infinity with $i$). Let $J=J^{(i)}$ be the function $\det (\tilde{u}_{ij}).$ The crucial thing is to get a lower bound on $J(0)$. Corollary \ref{Corollary of distance control} implies that the distance in the metric $\tilde{g}^{(i)}$ corresponding to $\tilde{u}^{(i)}$ from the origin to the boundary of $\tilde{P}^{(i)}$ tends to infinity. By construction, $|\widetilde{Rm}^{(i)}|$ is equal to 1 at the origin. Since the derivative of Riemannian curvature is uniformly bounded, we can find a fixed small number $\delta$ such that $|\widetilde{Rm}^{(i)}| \geq 1/2$ on the $\tilde{g}^{(i)}$ ball of radius $\delta$ about the origin. On the other hand Lemma \ref{Elliptic balls} implies that this ball contains a Euclidean ellipse of area at least $c J(0)^{-1/2} \delta^n$, for some fixed $c$. Thus $$\int_{\tilde{P}^{(i)}} |\widetilde{Rm}^{(i)}|^n d \mu_{Euc} \geq c \delta^n J(0)^{-1/2}.$$
Since the $L^n$-norm of the Riemannian curvature is bounded and is scaling invariant, the integral on the left is bounded. So we obtain a lower bound on $J(0)$, as required. Combined with the upper bound on $\tilde{u}_{ij}$ this lower bound on $J(0)$ yields an upper bound on $\tilde{u}^{ij}$ at the origin. Now Lemma \ref{Control of Hessian of u} gives an upper bound on $\tilde{u}^{ij}$ at points of bounded $\tilde{g}$ distance from the origin. The upper bound on $\tilde{u}_{ij}$ implies that on compact subsets of $\mathbb{R}^n$ the $\tilde{g}$ distance to the origin is bounded. So we conclude that $\tilde{u}^{ij}$ is bounded above on compact subsets of $\mathbb{R}^n$. Also $\tilde{u}^{ij}$ is bounded below on compact subsets of $\mathbb{R}^n$. Once we have these upper and lower bounds on $\tilde{u}^{ij}$ the convergence of a subsequence is straightforward, and the limit function $\tilde{U}$ satisfies ${\tilde{U}^{ij}}_{\quad ij}=0$. However, the following theorem tells us that it cannot happen. \\

Note that Donaldson proves a stronger result in the case of n = 2 and
Trudinger-Wang study a similar fourth order PDE in their work \cite{TW2}. However,
our approach is different than theirs.

\begin{prop}
There is no convex function $u$ satisfying the following conditions simultaneously: 1. ${u^{ij}}_{ij}=0$. 2. $|F| \leq 1.$ 3. $ |\nabla u| < M$.
\end{prop}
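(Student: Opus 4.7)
I would argue by contradiction, working in Case 1 of the blow-up analysis where the domain of $u$ is all of $\mathbb{R}^n$.

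\emph{Step 1: global Hessian upper bound.} Since $P=\mathbb{R}^n$, for every point $p$ and every unit vector $\nu$ the segment $\{p+t\nu:-3R\le t\le 3R\}$ lies in $P$ for every $R>0$. The hypothesis $|\nabla u|<M$ implies $u$ satisfies an $M'$-condition for some $M'$ comparable to $M$, so Lemma \ref{Upper bound of Hessian u}, applied with $R\to\infty$, gives a global bound $u_{ij}(p)\nu^i\nu^j\le C$ depending only on $M$. Thus $(u_{ij})\le C I$ everywhere, equivalently the positive function $v:=1/\det(u_{kl})$ is uniformly bounded below by a positive constant, and the cofactor matrix $(U^{ij})$ is uniformly bounded above.

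\emph{Step 2: integrability from the gradient bound.} Strict convexity makes $\nabla u:\mathbb{R}^n\to\Omega$ a diffeomorphism onto its image, and $|\nabla u|<M$ forces $\Omega\subset B_M$. The change-of-variables formula then yields
$$\int_{\mathbb{R}^n}\det(u_{ij})\,dx \;=\; |\Omega|\;\le\;|B_M|\;<\;\infty,$$
so $1/v=\det(u_{ij})\in L^1(\mathbb{R}^n)$.

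\emph{Step 3: divergence form, Caccioppoli, and conclusion.} By the Piola identity $\partial_iU^{ij}=0$ (valid because $(u_{ij})$ is the Hessian of a scalar function), Donaldson's rewriting $u^{ij}{}_{ij}=U^{ij}v_{ij}$ turns into the divergence-form equation $\partial_i(U^{ij}\partial_j v)=0$ on $\mathbb{R}^n$. Testing against $\eta^2(v-\inf v)$ for a suitable cutoff $\eta$ and integrating by parts gives a Caccioppoli-type bound
$$\int\eta^2\,U^{ij}v_iv_j\,dx \;\le\; C\int|\nabla\eta|^2\,U^{ij}(v-\inf v)^2\,dx.$$
Choosing $\eta$ so that the right-hand side vanishes as the support of $\eta$ exhausts $\mathbb{R}^n$ forces $U^{ij}v_iv_j\equiv 0$; since $(U^{ij})$ is positive definite, $v\equiv\mathrm{const}$. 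But a positive constant $v$ on $\mathbb{R}^n$ is incompatible with $1/v\in L^1(\mathbb{R}^n)$, since $\mathbb{R}^n$ has infinite Lebesgue measure, giving the desired contradiction.

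\emph{Main obstacle.} The delicate point is Step 3: although the Hessian upper bound makes $\partial_i(U^{ij}\partial_j\,\cdot\,)$ uniformly elliptic from above, it is not uniformly elliptic from below, because $U^{ij}=(\det u)\,u^{ij}$ can degenerate wherever $\det(u_{ij})\to 0$. A naive compactly supported cutoff controls the Caccioppoli right-hand side only by $O(R^{n-2})$, which closes the argument in dimensions $n\le 2$ (a logarithmic cutoff is needed in the borderline case $n=2$) but not for $n\ge 3$; in higher dimensions one must build the $L^1$-integrability of $1/v$ from Step 2 directly into the choice of test function. An alternative route would be to combine the divergence-form equation with a J\"orgens--Calabi--Pogorelov-type rigidity theorem to conclude directly that $u$ is a quadratic polynomial, which already contradicts $|\nabla u|<M$ on $\mathbb{R}^n$.
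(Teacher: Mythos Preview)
Your Steps~1--2 are sound, but Step~3 is the genuine gap you yourself flag, and the paper resolves it by a different and simpler mechanism that avoids the Caccioppoli route entirely. Rather than an energy estimate, the paper upgrades the global Hessian bound of Step~1 to a \emph{pointwise decay} statement: since $|\nabla u|<M$, along any radial ray through the origin the radial second derivative $H(t)=\mathrm{Hess}(u)(\bar v,\bar v)$ has bounded total integral, so for large $|x|$ one has $\int_{-1}^{1}H(|x|+t)\,dt<2\epsilon$. Feeding this back into the Riccati inequality $(H^{-1})''\le 1$ exactly as in Lemma~\ref{Upper bound of Hessian u} yields $\mathrm{Hess}(u)_x(\bar v,\bar v)\le C\epsilon$ for $|x|$ large. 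One eigenvalue going to zero forces $\det(u_{ij})\to 0$ at infinity, hence $G=1/\det(u_{ij})\to\infty$.

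Once $G$ tends to infinity at infinity, it attains a global interior minimum, and the \emph{strong minimum principle} for the non-divergence equation $U^{ij}G_{ij}=0$ forces $G$ constant---a contradiction. The key advantage over your approach is that the strong minimum principle needs only local positive definiteness of $(U^{ij})$ at the minimum point, not any uniform lower ellipticity bound; this is exactly why the paper's argument works in every dimension while your Caccioppoli estimate stalls for $n\ge 3$. Your $L^1$ observation in Step~2 is correct but turns out to be unnecessary: the pointwise decay $\det(u_{ij})\to 0$ is both stronger and easier to exploit here.
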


\begin{proof}
Let $G=\frac{1}{\det(u_{ij})}$, then $G$ satisfies $U^{ij}G_{ij}=0$ where $U^{ij}$ is the cofactor matrix of $u_{ij}$. Applying Lemma \ref{Upper bound of Hessian u}, there is a constant $C$ such that ${\rm Hess(u)}(v,v) < C$ for any unit vector $v$. And for arbitrary $\epsilon > 0$, we know that for $|x|$ large enough, $$\int_{-1}^1 {\rm Hess}(u)|_{x+tv}(\bar{v}, \bar{v}) dt < 2\epsilon,$$ where $\bar{v}$ is the unit vector at $x$ pointing away from the origin. Using the same tricks as we did in Lemma \ref{Upper bound of Hessian u}, we get $${\rm Hess}(u)|_x(\bar{v},\bar{v}) \leq \max\left(\frac{4\epsilon}{\pi}, \left( \frac{4\epsilon}{\pi} \right)^2 \right).$$

Now it is clear that for any $\epsilon > 0$, there is a $R > 0$, such that for any $x$ outside $B(0,R)$, $|\det(u_{ij})(x)|< \epsilon$. That tells us that $G$ reaches its minimum in the interior of $\mathbb{R}^n$. Since $G$ satisfies $U^{ij}G_{ij}=0$, we know that $G$ must be a constant; hence it must be zero, a contradiction.
\end{proof}

{\it Case 2: The limiting domain is $(\mathbb{R}^+)^m \times \mathbb{R}^{n-m}, m>1$.}

In those cases, we still follow Donaldson's work to get the uniform ellipticity of the operator $(U^{ij})$ and use a different method to rule out the singularities. We pick a point $\tilde{p}_i$ such that it satisfies the following two conditions.
\begin{itemize}
\item The Euclidean distance between $\tilde{p}_i$ and $p_i$ is bounded and hence its Riemannian distance is also bounded by Lemma \ref{Riemannian distance controlled by Euclidean distance}. Since the derivative of the Riemannian curvature is controlled, we can assume $|Rm(\tilde{p}_i)| \leq 1/2$.
\item $\tilde{p}_i$ is not close to the boundary in the sense of the Euclidean distance and hence it is not close to the boundary in the sense of the Riemannian distance by Corollary \ref{Corollary of distance control}.
\end{itemize}

The remaining process is the same as in the previous case, we normalize $\tilde{u}^{(i)}$ at $\tilde{p}_i$ and there is a subsequence converging to a smooth function $U$ satisfying ${U^{ij}}_{ij}=0, |F^{(\infty)}|^2={U^{ij}}_{kl} {U^{kl}}_{ij} \leq 1$ and $M$-condition. Notice that the function $f(x)=x \log x$ satisfies $f'(x/2)-f'(x)=-\log 2, x>0$. Then for any positive number $c$, there is a $D > 0$ such that $f'(x/D)-f'(x) < - c$. Based on this observation, let us check what happens in our limiting function $U$. Without loss of generality, we can assume one edge of $P$ is $\{ x_1=0 \}$ and $\{ x_1 > 0 \} \cap P$ is not empty. We want to consider the derivative of $u^{(i)}$ in the $x_1$ direction near the boundary $x_1=0$. Since $|\nabla A|$ is bounded, what really matters is $x_1 \log x_1$. Hence for any $c>0$, there is a uniform constant $D > 0$, such that
$$\frac{\partial u^{(i)}}{\partial x_1} (x_1/D, x_2, \ldots, x_n) - \frac{\partial u^{(i)}}{\partial x_1} (x_1, \ldots, x_n) < -c$$ 
for all functions $u^{(i)}$. Since we normalize $\tilde{u}^{(i)}$ at $\tilde{p}_i$ whose Euclidean distance to the boundary is bounded from below, then for any positive constant $c > 0$, there is a uniform constant $d > 0$ such that for every point $x$ and for any $j \leq m$, if its $j$-th coordinate is less than $d$, then $$\frac{\partial \tilde{u}^{(i)}}{\partial x_j}(x) < -c.$$ The same conclusion holds for our limiting function $U$. In the next proposition, we will show that it is impossible.

\begin{prop}
There is no convex function $u$ defined on $(\mathbb{R}^+)^m \times {\mathbb{R}}^{n-m}$ satisfying the following conditions simultaneously:\
\begin{itemize}
\item ${u^{ij}}_{ij}=0$
\item $|F|^2={u^{ab}}_{cd} {u^{cd}}_{ab} \leq 1$
\item $u$ satisfies the $M$-condition.
\item For every positive number $C > 0$, there is a $d > 0$ such that for every point $x$ and for any $i \leq m$, if its $i$-th coordinate is less than $d$, then $$\frac{\partial u}{\partial x_i} < -C$$
\end{itemize}
\end{prop}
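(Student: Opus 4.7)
The plan is to mirror the strategy of the preceding proposition. Set $G=1/\det(u_{ij})$, which is positive on $\Omega = (\mathbb{R}^+)^m \times \mathbb{R}^{n-m}$. By the reformulation recalled just before Case 1, the hypothesis ${u^{ij}}_{ij}=0$ is equivalent to the linear elliptic equation $U^{ij}G_{ij}=0$ (equivalently, after dividing by $\det u>0$, to $u^{ij}G_{ij}=0$, with positive-definite symbol $u^{ij}$). The goal is to show that $G$ must attain its positive infimum in the interior of $\Omega$ and apply the strong maximum principle to conclude that $G$ is constant, then contradict the constancy against the boundary behavior.

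First I handle the non-compact components of $\partial\Omega$. For any $j>m$, the $M$-condition may be applied to arbitrarily long segments along the $x_j$-axis since the 3x-extension automatically stays in $\Omega$; thus $\int u_{jj}\,dx_j \leq M$ on all intervals. Convexity makes $u_j$ monotone in $x_j$, hence $|u_j|$ is uniformly bounded. Now the curvature inequality $(\partial/\partial x_j)^2(1/u_{jj})\leq |F|\leq 1$, argued exactly as in Lemma \ref{Upper bound of Hessian u}, forces $u_{jj}(x)\to 0$ as $|x_j|\to\infty$. Combined with the upper bound on the remaining Hessian entries (Lemma \ref{Upper bound of Hessian u} applied with respect to $\mathrm{Dist}_{\rm Euc}(\cdot,\partial\Omega)$), this gives $\det u_{ij}\to 0$, i.e. $G\to+\infty$. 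An analogous argument using scale-invariant segments $[x_i,\tfrac{3}{2}x_i]$ inside $\Omega$ (whose 3x-extensions remain in $\Omega$ exactly because $x_i>0$) yields $G(x)\to\infty$ also as $x_i\to+\infty$ for $i\leq m$.

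The remaining and most delicate boundary face is $\{x_i=0\}$ with $i\leq m$, and this is where condition (4) must be used. Note that condition (4) forces $u_i\to-\infty$ as $x_i\to 0^+$, which by the same curvature bound $(1/u_{ii})''\leq 1$ typically forces $u_{ii}\to\infty$, and hence $\det u_{ij}\to\infty$, $G\to 0$ — the \emph{opposite} of what is needed for a direct maximum-principle argument. To circumvent this, I propose a rescaling near this boundary: choose a sequence $p_k\in\Omega$ with $(p_k)_i\to 0^+$ witnessing the failure of $G\to\infty$ there, and rescale the polytope by factors dictated by the quantitative form of (4) (encoded by the identity $f'(x/D)-f'(x)=-\log D$ for $f(x)=x\log x$ that appears in the discussion preceding the proposition). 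With this rescaling the boundary face is pushed to infinity, the $|F|\leq 1$ and $M$-condition are preserved by the invariance displayed in Donaldson's proposition, and, crucially, the exponential-type control of condition (4) becomes a uniform gradient bound $|\nabla\widetilde u|<M$ in the limit. The limiting function $\widetilde U$ is therefore a convex function on all of $\mathbb{R}^n$ satisfying the hypotheses of the preceding Case 1 proposition, which was just ruled out.

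The main obstacle is verifying this third step rigorously: one must upgrade the qualitative content of condition (4) ("for each $C$ a $d$ exists") to a uniform, scale-invariant estimate strong enough to produce a Case-1 limit with bounded gradient. Concretely, the hard lemma is that $u$ differs from the Guillemin model $u_0=\tfrac{1}{2}\sum_{i\leq m}x_i\log x_i$ by a function whose gradient, after the rescaling dictated by $x\log x$, stays uniformly bounded along the sequence $p_k$. Once this quantitative reduction is established, the conclusion follows by invoking the preceding proposition, yielding the desired contradiction.
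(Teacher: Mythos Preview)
Your outline diverges from the paper's proof and, more importantly, the third step does not work as stated. The rescaling $\tilde u(x)=\lambda u(\lambda^{-1}x)$ from Donaldson's proposition maps $(\mathbb{R}^+)^m\times\mathbb{R}^{n-m}$ to itself; it does \emph{not} push the face $\{x_i=0\}$ to infinity. Hence any subsequential limit of your rescaled sequence still lives on $(\mathbb{R}^+)^m\times\mathbb{R}^{n-m}$ and still satisfies conditions (1)--(4), so the reduction to the $\mathbb{R}^n$ proposition is circular rather than a genuine reduction. Your own remark that near $\{x_i=0\}$ one expects $G\to 0$ (not $+\infty$) already signals that the direct maximum principle for $G=1/\det(u_{ij})$ on $\Omega$ cannot close; the rescaling you propose does not repair this, and the ``hard lemma'' you isolate (that $u-u_0$ has gradient bounded after rescaling) is neither proved nor implied by hypothesis (4), which is purely qualitative.

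The paper resolves exactly this difficulty by passing to the Legendre dual $\phi$ of $u$ and working in the $y$-coordinates $y_i=\partial u/\partial x_i$. Condition (4) and the $M$-condition translate into the statement that the image $\Omega_2$ is a tube: $y_{m+1},\dots,y_n$ are bounded and each $y_i$ for $i\le m$ is bounded above but not below. The equation becomes $\phi^{ij}\bigl(\log\det(\phi_{ab})\bigr)_{ij}=0$. One then restricts to $\Omega_4=\Omega_2\cap\{y_1>c_1\}\cap\cdots\cap\{y_m>c_m\}$ and applies the maximum principle not to $\log\det(\phi_{ab})$ itself but to the barrier
\[
G(y)=\log\det(\phi_{ab})-\sum_{i\le m}\lambda_i(y_i-\bar y_i).
\]
On the portions of $\partial\Omega_4$ where some $y_j$ (with $j>m$) reaches the boundary of the tube, the corresponding $x$-points escape to infinity inside $\{x_i\ge d\}$, and your own Lemma \ref{Upper bound of Hessian u} argument then gives $\det(u_{ij})\to 0$, i.e.\ $\log\det(\phi_{ab})\to+\infty$. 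On the artificial faces $\{y_i=c_i\}$ one chooses $\lambda_i$ large enough, using the uniform lower bound on $\det(\phi_{ab})$ coming from the upper bound on $\det(u_{ij})$ in $\{x_i\ge d\}$. Thus $G$ attains an interior minimum, forcing $\log\det(\phi_{ab})$ to be affine on $\Omega_4$, which contradicts its blow-up at the tube boundary. The key idea you are missing is this passage to the dual picture together with the linear barrier in the $y_i$-directions; this is what converts the ``bad'' boundary $\{x_i=0\}$ into an open end at $y_i\to-\infty$ where a barrier can absorb the behavior.
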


\begin{proof}
We will check the $\Omega_1 = \mathbb{R}^{+} \times \mathbb{R} \times \cdots \times \mathbb{R}$ case first. Let us consider the Legendre dual of $u$, i.e, $\phi$. We have $$0= \phi^{ij}(\log \det (\phi_{ab}))_{ij}$$ and in the coordinate system $$y_i=\frac{\partial u}{\partial x_i}.$$ The corresponding region $\Omega_2$ of $\Omega_1$ in the $y_i$ space will be a tube, where $y_2, \ldots, y_n$ are bounded and $y_1$ has an upper bound but no lower bound. Let us pick a constant $c$ sufficiently small such that $\Omega_4=\{y_1 > c \} \cap \Omega_2 \neq \o$. By the fourth assumption, there is a constant $d > 0$ such that the preimage of $\Omega_4$ will be contained in $\Omega_3=\Omega_1 \cap \{x_1 \geq d\}$.\\

Without loss of generality, we can pick a point $\bar{y} \in \Omega_4$ such that its first coordinate $\bar{y}_1 > c$. Let $G(y)=\log\det(\phi_{ab}) - \lambda (y_1-\bar{y}_1)$, where $\lambda$ is a constant to be determined later. The idea is that if $G(y)$ reaches its minimum in the interior of $\Omega_4$, the equation $$0=\phi^{ij}G_{ij}$$ tells us that in any compact set of $\Omega_4$, $G$ is a constant. Hence $\log\det(\phi_{ab})=\lambda(y_1-\bar{y}_1)+C$ on any compact set of $\Omega_4$. However, $\log\det(\phi_{ab})$ approaches infinity at some boundaries of $\Omega_4$ and we would get a contradiction.\\

The remaining task is to pick an appropriate $\lambda$ such that $G(y)$ reaches it minimum in the interior of $\Omega_4$. Suppose $y \in \partial \Omega_4$ and the first coordinate of $y$ is strictly greater than $c$. We claim that $G(y)$ is infinity in this case. Let us pick a sequence of points $y_n \in \Omega_4$ approaching $y$. It is easy to see that $y$ is also in the boundary of $\partial \Omega_2$, since the mapping from $\Omega_4$ to $\Omega_3$ is local diffeomorphism, the corresponding $x_n \in \Omega_3$ of $y_n$ must approach infinity. Because the first coordinate of $x_n$ is greater than $d$, for any unit vector $v$, ${\rm Hess}(u)(v,v)$ is bounded from above by Lemma \ref{Upper bound of Hessian u}. And since $x_n$ approaches infinity, ${\rm Hess}(u)(v,v)$ approaches 0 where $v$ is the unit vector at $x_n$ pointing away from $(d,0,\ldots,0)$. So $\det(u_{ij})(x_n)$ approaches 0. Hence we can conclude that $\det(\phi_{ab})(y_n)$ approaches infinity. \\

What left is that the first coordinate of $y$ is equal to $c$. Since $\det(u_{ij})$ is bounded from above in $\Omega_3$, $\det(\phi_{ab})$ is bounded from below in $\Omega_4$. Hence we can find a $\lambda$ big enough such that for all such $y$, $G(y) > G(\bar{y})$.\\

For the other cases, they are almost the same. We let $\Omega_4 = \Omega_2 \cap \{ y_1 > c_1 \} \cap \cdots \cap \{ y_m > c_m \}$ and let $G(y)=\log \det (\phi_{ab}) - \lambda_1(y_1-\bar{y}_1) - \cdots - \lambda_m(y_m-\bar{y}_m)$. Then we can show that $G(y)$ reaches its minimum in the interior of $\Omega_4$ by picking $\lambda_1, \ldots, \lambda_m$ appropriately.
\end{proof}

By the above arguments, we complete the proof of Theorem (\ref{Calabi flow}).

Hongnian Huang, \ hnhuang@gmail.com

Centre interuniversitaire de recherches en geometri et topologie

Universite du Quebec a Montréal

Case postale 8888, Succursale centre-ville

Montreal (Quebec)


\begin{thebibliography}{1}
\bibitem{A1} M. Abreu, {\em K\"ahler geometry of toric varieties and extremal metrics}, International
J. Math. 9 (1998) 641-651.

\bibitem{A2} M. Abreu, {\em K\"ahler metrics on toric orbifolds}, J. Differential Geom. 58 (2001), no. 1, 151-187

\bibitem{ACG} V. Apostolov, D. Calderbank, P. Gauduchon, {\em Hamiltonian 2-forms in Kaehler Geometry I: General Theory}, J. Differential Geom. 73 (2006), 359-412.

\bibitem{Ca1} E. Calabi, {\em Extremal K\"ahler metric, in Seminar of Differential Geometry},
ed. S. T. Yau, Annals of Mathematics Studies 102, Princeton University
Press (1982), 259-290.

\bibitem{Ca2} E. Calabi, {\em Extremal K\"ahler metric, II, in Differential Geometry and Complex
Analysis}, eds. I. Chavel and H. M. Farkas, Spring Verlag (1985), 95-114.

\bibitem{Chen1} X. X. Chen, {\em Calabi flow in Riemann surfaces revisited}, IMRN, 6 (2001), 275-297.

\bibitem{ChenHe} X. X. Chen, W. Y. He, {\em On the Calabi flow}, Amer. J. Math. 130 (2008), no. 2, 539-570.

\bibitem{ChenHe2} X. X. Chen, W. Y. He, {\em The Calabi flow on K\"ahler surface with bounded Sobolev constant--(I)}, arXiv:0710.5159

\bibitem{ChenHe3} X. X. Chen, W. Y. He {\em The Calabi flow on toric Fano surface}, arXiv:0807.3984.

\bibitem{Chr} P. T. Chrusci\'el, {\em Semi-global existence and convergence of solutions of
the Robison-Trautman(2-dimensional Calabi) equation}, Comm. Math. Phys. 137 (1991), 289-313.

\bibitem{D1} S.K. Donaldson, {\em Scalar curvature and stability of toric varieties}, Jour. Differential
Geometry 62 (2002), 289-349.

\bibitem{D2} S.K. Donaldson, {\em Interior estimates for solutions of Abreu°Øs equation}, Collectanea
Math. 56 (2005), 103-142.

\bibitem{D3} S.K. Donaldson, {\em Extremal metrics on toric surfaces: a continuity method}, J. Differential Geom. 79 (2008), no. 3, 389-432.

\bibitem{D4} S.K. Donaldson, {\em Constant scalar curvature metrics on toric surfaces},	Geom. Funct. Anal. 19 (2009), no. 1, 83-136. 

\bibitem{JF} J. Fine, {\em Calabi flow and projective embeddings}, J. Differential Geom. 84 (2010), no. 3, 489-523.

\bibitem{G1} V. Guillemin, {\em Kaehler structures on toric varieties}, J. Differential Geom. 40
(1994), 285-309.

\bibitem{G2} V. Guillemin, {\em Moment maps and combinatorial invariants of Hamiltonian $T^n$-
spaces}, Birkhauser, 1994.

\bibitem{He} W. Y. He, {\em Local solution and extension to the Calabi flow}, arXiv:0904.0978.

\bibitem{Pe} G. Perelman, {\em The entropy formula for the Ricci flow and its geometric
applications}, math.DG/0211159, 2002.

\bibitem{Shi}W. X. Shi, {\em Ricci deformation of the metric on complete noncompact
Riemannian manifolds} , J. Differential Geom, 30(2) (1989), 303-394.

\bibitem{GA} G. Sz\'{e}kelyhidi, {\em The Calabi functional on a ruled surface},  arXiv:math/0703562.

\bibitem{TW2} N. S. Trudinger and X-J. Wang, {\em The Bernstein problem for affine maximal
hypersurfaces}, Invent. Math., 140 (2000), 399-422.
\end{thebibliography}
\end{document}